\documentclass[11pt]{amsart}
\usepackage{fullpage}
\usepackage{amsmath, amssymb, amsthm}
\usepackage[foot]{amsaddr}

\usepackage{color}
\usepackage[colorlinks=true,linkcolor=red,citecolor=blue,urlcolor=cyan]{hyperref}

\newtheorem{thm}{Theorem}[section]
\newtheorem{cor}[thm]{Corollary}
\newtheorem{lem}[thm]{Lemma}


\begin{document}
\title{Analyticity of Steklov eigenvalues of \\  nearly-circular and nearly-spherical domains}

\author{Robert Viator}
\address{Department of Mathematics, Southern Methodist University, Dallas, TX}
\email{rviator@ima.umn.edu}

\author{Braxton Osting}
\address{Department of Mathematics, University of Utah, Salt Lake City, UT}
\email{osting@math.utah.edu}

\subjclass[2010]{
26E05, 
35C20, 
35P05, 
41A58} 

\keywords{Dirichlet-to-Neumann operator, Steklov eigenvalues, perturbation theory}

\date{\today}

\begin{abstract} 
We consider the Dirichlet-to-Neumann operator (DNO) on nearly-circular and nearly-spherical domains in two and three dimensions, respectively. Treating such domains as perturbations of the ball, we  prove the analyticity of the DNO with respect to the domain perturbation parameter. Consequently, the Steklov eigenvalues are also shown to be analytic in the domain perturbation parameter. To obtain these results, we use the strategy of Nicholls and Nigam (2004); we transform the equation on the perturbed domain to a ball and geometrically bound the Neumann expansion of the transformed Dirichlet-to-Neumann operator. 
\end{abstract}

\maketitle

\section{Introduction}

Let $\Omega_\varepsilon \subset \mathbb R^d$ for $d=2,3$ be a nearly-circular or nearly-spherical domain of the form 
\begin{equation} \label{e:Omega}
\Omega_\varepsilon = \{ (r,\hat \theta) \colon r \leq  1 + \varepsilon \rho(\hat \theta), \ \hat \theta \in S^{d-1} \},
\end{equation} 
where the \emph{domain perturbation function} $\rho \in C^{s+2}(S^{d-1})$ for some $s\in \mathbb N $ and 
the \emph{perturbation parameter}, $\varepsilon \geq 0$, is assumed to be small in magnitude. 
We consider the Steklov eigenproblem on the perturbed domain $\Omega_\varepsilon$, 
\begin{subequations} \label{e:Steklov}
\begin{align}
\label{e:Steklova}
\Delta u_\varepsilon &= 0  && \textrm{in } \Omega_\varepsilon \\ 
\label{e:Steklovb}
\partial_{n_\varepsilon} u_\varepsilon &= \sigma_\varepsilon u_\varepsilon && \textrm{on } \partial \Omega_\varepsilon.
\end{align}
\end{subequations}
Here $\Delta$ is the Laplacian on $H^1(\Omega_\varepsilon)$ and $\partial_{n_\varepsilon} = \hat n_\varepsilon \cdot \nabla$ denotes the outward normal derivative on the boundary of $\Omega_\varepsilon$. 
It is well-known that the Steklov spectrum is discrete, real, and non-negative; we enumerate the eigenvalues in increasing order, 
$0=\sigma_0(\Omega_\varepsilon) < \sigma_1(\Omega_\varepsilon) \leq \sigma_2(\Omega_\varepsilon) \cdots \to \infty$. 
The Steklov spectrum coincides with the spectrum of the Dirichlet-to-Neumann operator (DNO), $G_\varepsilon \colon H^{s + \frac{1}{2}} (\partial \Omega_\varepsilon) \to H^{s - \frac{1}{2}} (\partial \Omega_\varepsilon)$, which maps 
$$ 
\xi \mapsto  G_\varepsilon \xi = \partial_{n_\varepsilon} u_\varepsilon, 
$$ 
where $u_\varepsilon$ is the \emph{harmonic extension} of $\xi$ to $\Omega_\varepsilon$, satisfying
\begin{subequations} \label{e:HarmExt}
\begin{align}
\label{e:HarmExta}
\Delta u_\varepsilon &= 0  && \textrm{in } \Omega_\varepsilon \\ 
\label{e:HarmExtb}
u_\varepsilon(\hat \theta) &= \xi(\hat \theta)  && \textrm{on } \partial \Omega_\varepsilon.
\end{align}
\end{subequations}
We refer the reader to  \cite{girouard2014spectral} for a general description of the Steklov spectrum.

The \emph{goal of this paper} is to prove the analyticity of the Steklov eigenvalues, $\sigma_\varepsilon$, in the perturbation parameter $\varepsilon$.
Our main result is the following theorem. 
\begin{thm} \label{t:Thm1}
Let $d=2$ or $3$ and $s \in \mathbb N$. 
If $\rho \in C^{s+2}(S^{d-1})$, then the Dirichlet-to-Neumann operator (DNO), 
$G_\varepsilon \colon H^{s+\frac 3 2}(\partial \Omega_\varepsilon ) \to H^{s + \frac 1 2}(\partial \Omega_\varepsilon )$,
is analytic in the domain parameter $\varepsilon$. 
More precisely, if $\rho \in C^{s+2}(S^{d-1})$, then there exists a Neumann series, 
$G_\varepsilon = \sum_{n=0}^\infty \varepsilon^n G_n$, 
that converges strongly as an operator from 
$H^{s+\frac 3 2}(S^{d-1})$ to $H^{s+ \frac 1 2}(S^{d-1})$. 
That is, there exists constants $K_1$ and $C$ such that 
$$
\|G_n \xi \|_{H^{s+\frac 1 2}(D)} \leq K_1 \| \xi \|_{H^{s+ \frac 3 2}(S^{d-1})} B^n
$$
for any $B > C |\rho|_{C^{s+2}}$. 
\end{thm}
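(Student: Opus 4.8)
The plan is to follow the Nicholls--Nigam strategy: flatten $\Omega_\varepsilon$ to the unit ball $B\subset\mathbb R^d$, derive a recursion for the Taylor coefficients of the transformed harmonic extension, close a geometric bound for these coefficients by induction using elliptic estimates on the ball, and finally pull the Neumann data back to $S^{d-1}$. First I would introduce the diffeomorphism $\Phi_\varepsilon\colon\overline B\to\overline{\Omega_\varepsilon}$, $\Phi_\varepsilon(\tilde r,\hat\theta)=\big((1+\varepsilon\rho(\hat\theta))\,\tilde r,\hat\theta\big)$, which carries $\partial B=S^{d-1}$ onto $\partial\Omega_\varepsilon$ and fixes the angular variables, and set $v:=u_\varepsilon\circ\Phi_\varepsilon$. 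Then the Dirichlet data becomes $v|_{\partial B}=\xi$, independent of $\varepsilon$, and, after multiplying the pulled-back Laplace equation by the appropriate power of $1+\varepsilon\rho$ to clear denominators, $\Delta u_\varepsilon=0$ becomes $\mathcal L_\varepsilon v=0$ in $B$ with $\mathcal L_\varepsilon=\sum_{j=0}^{M}\varepsilon^j\mathcal L_j$ a polynomial in $\varepsilon$ of fixed finite degree $M$; here $\mathcal L_0=\Delta$ is the flat Laplacian and each $\mathcal L_j$ ($j\ge1$) is a second-order operator whose coefficients are polynomials in $\rho$, $\nabla_S\rho$, $\nabla_S^2\rho$ with coefficients smooth in $\tilde r$, so in particular $\mathcal L_j\equiv0$ when $\rho\equiv0$. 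This finite-range structure is the essential simplification of the method; writing out $\mathcal L_1,\mathcal L_2$ explicitly is a routine change-of-variables computation that I would record as a preliminary lemma.

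Next I would substitute $v=\sum_{n\ge0}\varepsilon^n v_n$ and match powers of $\varepsilon$ to get $\Delta v_0=0$ with $v_0|_{\partial B}=\xi$, and for $n\ge1$, $\Delta v_n=F_n:=-\sum_{j=1}^{\min(n,M)}\mathcal L_j v_{n-j}$ in $B$ with $v_n|_{\partial B}=0$. On the ball the harmonic extension is bounded $H^{s+\frac32}(\partial B)\to H^{s+2}(B)$, so $\|v_0\|_{H^{s+2}(B)}\le C_e\|\xi\|_{H^{s+\frac32}(S^{d-1})}$, and the Dirichlet problem with zero boundary data gives $\|v_n\|_{H^{s+2}(B)}\le C_e\|F_n\|_{H^{s}(B)}$. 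Since $\mathcal L_j$ is second order with coefficients controlled in $C^{s}$ by $|\rho|_{C^{s+2}}$ --- which is exactly why two extra derivatives on $\rho$ are assumed --- the Sobolev--H\"older multiplier estimate gives $\|\mathcal L_j w\|_{H^s(B)}\le C_0\,\|w\|_{H^{s+2}(B)}$ with $C_0=C_0(|\rho|_{C^{s+2}})$ and, crucially, $C_0\le C\,|\rho|_{C^{s+2}}\,P(|\rho|_{C^{s+2}})$ for a polynomial $P$. An induction on $n$ (choosing $K\ge C_e$) then yields $\|v_n\|_{H^{s+2}(B)}\le K\|\xi\|_{H^{s+\frac32}(S^{d-1})}B^n$ for every $B$ above a fixed multiple of $C_0$: the inductive step requires $C_eC_0\sum_{i=1}^{M}B^{-(i-1)}\le B$, which holds once $B>C|\rho|_{C^{s+2}}$ for a suitable $C$.

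Finally I would return to the boundary. The chain rule $\nabla_x u_\varepsilon=J_\varepsilon\nabla_y v$ (with $J_\varepsilon$ the inverse-transpose Jacobian of $\Phi_\varepsilon$) together with the formula $\hat n_\varepsilon=\big(\hat\theta-\tfrac{\varepsilon}{1+\varepsilon\rho}\nabla_S\rho\big)\big/\big(1+\tfrac{\varepsilon^2}{(1+\varepsilon\rho)^2}|\nabla_S\rho|^2\big)^{1/2}$ for the outward unit normal expresses $G_\varepsilon\xi=\partial_{n_\varepsilon}u_\varepsilon\circ\Phi_\varepsilon=\big(\mathcal B_\varepsilon v\big)\big|_{\partial B}$, where $\mathcal B_\varepsilon$ is first order in $y$ with coefficients that are algebraic in $\varepsilon$ and analytic for $|\varepsilon|$ small (the normalization factor is $(1+O(\varepsilon^2))^{-1/2}$). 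Expanding, $\mathcal B_\varepsilon=\sum_{m\ge0}\varepsilon^m\mathcal B_m$ with $\|\mathcal B_m w\|_{H^{s+\frac12}(S^{d-1})}\le C_T\,\beta^m\,\|w\|_{H^{s+2}(B)}$, where the first-derivative trace $H^{s+2}(B)\to H^{s+\frac12}(\partial B)$ is absorbed into $C_T$ and, since each $\mathcal B_m$ with $m\ge1$ carries a factor of $\rho$ or its derivatives, $\beta\le C|\rho|_{C^{s+2}}$. Multiplying the two convergent series, $G_\varepsilon\xi=\sum_{p\ge0}\varepsilon^p G_p\xi$ with $G_p\xi=\sum_{m+n=p}\mathcal B_m v_n$, and the Cauchy product of two geometrically bounded sequences is again geometrically bounded: $\|G_p\xi\|_{H^{s+\frac12}(S^{d-1})}\le\sum_{m+n=p}C_T\beta^m\cdot K\|\xi\|_{H^{s+\frac32}}B^n\le K_1\|\xi\|_{H^{s+\frac32}(S^{d-1})}B^p$ once $B$ is enlarged to dominate $\beta$ and absorb the polynomial-in-$p$ factor. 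This gives the stated estimate for any $B>C|\rho|_{C^{s+2}}$; strong convergence of $\sum\varepsilon^nG_n$ as an operator $H^{s+\frac32}(S^{d-1})\to H^{s+\frac12}(S^{d-1})$, and the identification of its sum with the DNO $G_\varepsilon$, then follow from the recursion because $\sum\varepsilon^n v_n$ solves the transformed harmonic-extension problem.

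The hard part is the second step: arranging the change of variables so that only finitely many powers of $\varepsilon$ survive, and then tracking the multiplier constants through the recursion so that a \emph{single} geometric ratio $B$, proportional to $|\rho|_{C^{s+2}}$, works for all $n$. This is precisely where the regularity budget $\rho\in C^{s+2}$ --- two derivatives above the target space $H^{s}$, consumed by the second-order operators $\mathcal L_j$ --- is used in full, and it is uniform across $d\in\{2,3\}$. Step~3 is then bookkeeping, modulo the one point of care that the non-polynomial normal-normalization factor is genuinely analytic in $\varepsilon$ with geometrically controlled coefficients.
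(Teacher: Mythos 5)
Your argument follows the paper's Nicholls--Nigam strategy essentially step for step: flatten $\Omega_\varepsilon$ to the unit ball via $(r,\hat\theta)\mapsto((1+\varepsilon\rho)^{-1}r,\hat\theta)$, multiply through by a power of $1+\varepsilon\rho$ so that the transformed operator is a polynomial of fixed degree in $\varepsilon$ (degree $2$ in the paper: $\Delta u_\varepsilon=\varepsilon L_1 u_\varepsilon+\varepsilon^2 L_2 u_\varepsilon$), obtain the geometric bound $\|u_n\|_{H^{s+2}}\le K_0\|\xi\|_{H^{s+3/2}}B^n$ by induction from the elliptic estimate on the ball and the $H^s$--$C^{s}$ multiplier inequality (this is where $\rho\in C^{s+2}$ is spent, exactly as you say), and then pass to the boundary via the trace theorem. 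The only substantive difference is in how the final boundary step is organized. The paper factors $G(1+\varepsilon\rho)=M_\rho(\varepsilon)\hat G_{\rho,\varepsilon}$, discards the scalar multiplier $M_\rho(\varepsilon)$ (which is manifestly analytic in $\varepsilon$), and, after clearing the remaining $(1+\varepsilon\rho)$ denominators, obtains a \emph{self-referential} low-order recursion for the Taylor coefficients $\hat G_{\rho,n}\xi$ in which $\hat G_{\rho,n-1}\xi$ and $\hat G_{\rho,n-2}\xi$ appear on the right; the geometric bound is then proved by the same induction as for the $u_n$. You instead expand the \emph{entire} first-order boundary operator $\mathcal B_\varepsilon$, normalization included, as a convergent power series $\sum_m\varepsilon^m\mathcal B_m$ and form the Cauchy product with $\sum_n\varepsilon^n v_n$. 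Both close with the same ingredients (trace theorem plus the H\"older--Sobolev multiplier bound), and your Cauchy-product packaging is slightly cleaner conceptually; one small remark is that when $B>\beta$ strictly, $\sum_{m+n=p}\beta^m B^n\le B^p/(1-\beta/B)$, so the geometric ratio $B$ need not be enlarged to absorb the apparent $(p+1)$ factor --- it is absorbed into the prefactor $K_1$ instead. With that cosmetic adjustment your proof is a correct and faithful variant of the paper's.
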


We prove Theorem~\ref{t:Thm1} in two and three dimensions separately; these proofs can be found in Sections~\ref{s:DNO-anal-2D} and \ref{s:DNO-anal-3D}, respectively. 
In both dimensions, our proof of Theorem~\ref{t:Thm1} follows the strategy in \cite{Nicholls_2004}. We first show the analyticity of the harmonic extension, that is, for fixed $\xi(\hat \theta)$ the solution $u_\varepsilon$ in \eqref{e:HarmExt} is analytic in $\varepsilon$. Using this, we then prove that the DNO, $G_\varepsilon$, is also analytically dependent on $\varepsilon$, establishing Theorem~\ref{t:Thm1}.  

Using an analyticity result in \cite{Kato_1966}, we obtain the analytic dependence of the Steklov eigenvalues $\{ \sigma_j(\varepsilon) \}_{j\in \mathbb{N}}$ on $\varepsilon$ within the same disc of convergence as in Theorem~\ref{t:Thm1}, as stated in the following corollary. 

\begin{cor} \label{c:Cor1}
The Steklov eigenvalues, $\sigma_\varepsilon$, consist of branches of one or several analytic functions which have at most algebraic singularities near $\varepsilon=0$.  The same is true of the corresponding eigenprojections. 
\end{cor}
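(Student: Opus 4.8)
\emph{Proof of Corollary~\ref{c:Cor1} (proposal).}
The plan is to deduce the corollary from Theorem~\ref{t:Thm1} together with the analytic perturbation theory for linear operators in \cite{Kato_1966}. The one subtlety is that the Steklov eigenvalues $\{\sigma_j(\varepsilon)\}$ are a priori eigenvalues of $G_\varepsilon$ acting on spaces over the $\varepsilon$-dependent boundary $\partial\Omega_\varepsilon$, whereas the perturbation theory requires a family of operators on a single, $\varepsilon$-independent Hilbert space. This reduction, however, is precisely the one already performed in the proof of Theorem~\ref{t:Thm1}: the diffeomorphism $S^{d-1}\ni\hat\theta\mapsto (1+\varepsilon\rho(\hat\theta))\hat\theta\in\partial\Omega_\varepsilon$ pulls $G_\varepsilon$ back to a family of operators on the fixed sphere, which I continue to write $G_\varepsilon : H^{s+\frac 32}(S^{d-1})\to H^{s+\frac 12}(S^{d-1})$, and Theorem~\ref{t:Thm1} exhibits this family as a Neumann series $G_\varepsilon=\sum_{n\ge 0}\varepsilon^n G_n$ converging in operator norm for $|\varepsilon|$ small. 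Since the pullback is a bounded isomorphism with bounded inverse between the relevant Sobolev spaces, conjugation by it preserves the spectrum together with multiplicities, so the spectrum of the transformed $G_\varepsilon$ is exactly $\{\sigma_j(\varepsilon)\}$.

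Next I would set up the Kato framework on the fixed Hilbert space $\mathcal H:=H^{s+\frac 12}(S^{d-1})$, with $G_\varepsilon$ regarded as a closed operator on $\mathcal H$ with the $\varepsilon$-independent domain $\mathcal D:=H^{s+\frac 32}(S^{d-1})$, which is dense in $\mathcal H$ and compactly embedded in it. The norm-convergent expansion $G_\varepsilon=\sum_n\varepsilon^nG_n$ with $G_n\in\mathcal L(\mathcal D,\mathcal H)$ shows that $\varepsilon\mapsto G_\varepsilon$ is an analytic family of type~(A) in the sense of \cite[Ch.~VII, \S2]{Kato_1966}; combined with the smoothing property of the Dirichlet-to-Neumann operator (so that $(G_\varepsilon-z)^{-1}$ maps $\mathcal H\to\mathcal D$) and the compactness of $\mathcal D\hookrightarrow\mathcal H$, this gives that each $G_\varepsilon$ has compact resolvent and that $(G_\varepsilon-z)^{-1}$ depends holomorphically on $\varepsilon$. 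Equivalently, one may simply note that $G_\varepsilon$ is self-adjoint on $L^2(S^{d-1})$ (with the inner product transported from $L^2(\partial\Omega_\varepsilon)$) with discrete spectrum, since $\Omega_\varepsilon$ is a bounded domain with smooth boundary for $|\varepsilon|$ small.

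With these hypotheses verified, Kato's theorem on the perturbation of isolated eigenvalues of finite multiplicity (\cite[Ch.~VII, \S1--\S2]{Kato_1966}) applies: near $\varepsilon=0$ the eigenvalues of $G_\varepsilon$ form branches of one or several analytic functions of $\varepsilon$ with at most algebraic (branch-point) singularities, the total multiplicity in a neighborhood of each unperturbed eigenvalue is constant for small $|\varepsilon|$, and the associated total eigenprojections have the same analytic/algebroidal structure. Applying this around each eigenvalue $\sigma_j(0)$ of $G_0$, the Dirichlet-to-Neumann operator of the unit ball (whose spectrum is known explicitly, discrete, and of finite multiplicity), yields the corollary for all Steklov eigenvalues and their eigenprojections. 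One could add that, because $G_\varepsilon$ is self-adjoint, Rellich's theorem (\cite[Ch.~VII, \S3]{Kato_1966}) in fact rules out the branch-point singularities and the eigenvalues and eigenprojections may be chosen entirely analytic in $\varepsilon$; but the stated corollary asserts only the weaker conclusion. The main obstacle is not deep: it is the bookkeeping of checking that the domain-straightening pullback is a bounded isomorphism with bounded inverse between the $\varepsilon$-dependent and fixed Sobolev spaces, so that spectra and multiplicities transfer, and that the Sobolev-space statement of Theorem~\ref{t:Thm1} delivers exactly the type-(A) / holomorphic-resolvent hypotheses that Kato's theorem requires; once these are in place the conclusion is a direct citation. \qed
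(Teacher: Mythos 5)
Your proposal is correct and follows essentially the same route as the paper: both invoke Theorem~\ref{t:Thm1} for analyticity of the pulled-back DNO on the fixed sphere and then cite Kato's analytic perturbation theory (\cite[Ch.~VII]{Kato_1966}); the paper compresses the verification of Kato's hypotheses into the single observation that $G_{\rho,\varepsilon}$ is self-adjoint (hence closed) on $L^2(S^{d-1})$, whereas you spell out the type-(A)/compact-resolvent bookkeeping and add the (correct, but unneeded here) remark that self-adjointness plus Rellich's theorem would even remove the algebraic branch points.
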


The proof of Corollary~\ref{c:Cor1} is given in Section~\ref{s:Steklov-anal}. 

Corollary~\ref{c:Cor1} justifies Assumption 1.1 in  \cite{Viator_2018}. Here, the first two terms of the asymptotoic series for $\sigma_\varepsilon$ are computed for reflection-symmetric nearly-circular domains.  
Corollary~\ref{c:Cor1} also justifies the computation of the shape derivative that appears in \cite{Akhmetgaliyev2016}. Here, numerical methods are developed for the eigenvalue optimization problem of maximizing the $k$-th Steklov eigenvalue as a function of the domain with a volume constraint.

\section{Two-dimensional nearly-circular domains} 
Here we consider the Steklov eigenproblem \eqref{e:Steklov} in $\mathbb{R}^2$.  We will identify $\hat \theta$ with its corresponding angle $\theta$ made with the positive $x$-axis, as usual. We write the Fourier series for $f\colon S^1 \to \mathbb C$ as 
\[
f(\theta) = \frac{1}{\sqrt{2 \pi}} \sum_{k \in \mathbb Z} \hat{f}(k) e^{i k \theta}, 
\qquad \textrm{where} \ \ 
\hat{f}(k) = \frac{1}{\sqrt{2 \pi}} \int_0^{2 \pi} f(\theta) e^{- i k \theta } \ d \theta.
\]

Denoting $\langle k \rangle = \sqrt{1 + k^2}$, we introduce the spaces $L^2(S^1)$ and $H^1(S^1)$ with norms
\begin{align*}
\|f\|^2_{L^2(S^1)} &= \int_0^{2\pi} |f|^2 \ d \theta = \sum_{k \in \mathbb Z} |\hat{f}(k) |^2\\
\| f \|^2_{H^1(S^1)} &= \int_0^{2 \pi} |f(\theta)|^2 + |f'(\theta)|^2 \ d \theta 
 = \sum_{k \in \mathbb Z} | \hat{f}(k) |^2 + k^2 | \hat{f}(k) |^2 
= \sum_{k \in \mathbb Z} \langle k\rangle^2 | \hat{f}(k) |^2.
\end{align*}
Similarly, we define the $H^s(S^1)$ space with norm 
$ \| f \|^2_{H^s(S^1)}  = \sum_{k \in \mathbb Z} \langle k\rangle^{2s} | \hat{f}(k) |^2$. 

\subsection{Analyticity of the harmonic extension for nearly-circular domains} 
\label{s:AnalHarmExt2d}
We first consider the problem  of harmonically extending a function $\xi(\theta)$ from $\partial \Omega_\varepsilon$ to $\Omega_\varepsilon$, 
\begin{subequations} \label{e:Laplace2d}
\begin{align}
& \left[r^{-1} \partial_r r \partial_r  + r^{-2} \partial_\theta^2 \right] v = 0  \\
& v(1+\varepsilon \rho(\theta), \theta) = \xi(\theta). 
\end{align}
\end{subequations}
Mapping $\Omega_\varepsilon$ to the unit disk, $D = \Omega_0$, we make the change of variables 
\begin{align} \label{e:ChangeVariables2d}
(r',\theta') = \left( (1+\varepsilon \rho(\theta))^{-1}  r, \theta \right). 
\end{align}
The partial derivatives in the new coordinates are given by
\[
\frac{\partial}{\partial r} = \frac{1}{1+\varepsilon \rho(\theta')} \frac{\partial}{ \partial r'}  
\qquad \textrm{and} \qquad 
\frac{\partial}{\partial \theta} = \frac{\partial }{ \partial \theta' } - \frac{\varepsilon r' \rho'(\theta')}{1+ \varepsilon \rho(\theta')} \frac{\partial }{ \partial r'}. 
\]
Applying this change of coordinates to the Laplace equation \eqref{e:Laplace2d} and setting \[u_\varepsilon(r',\theta') = v( (1+ \varepsilon \rho(\theta') ) r', \theta' ),\] we obtain the problem 
\begin{align*}
 \frac{1}{r'  \left( 1 + \varepsilon \rho(\theta') \right)^2} \frac{\partial}{ \partial r'} r' \frac{\partial u_\varepsilon }{ \partial r'} +   \frac{1}{(r')^2  \left( 1 + \varepsilon \rho(\theta') \right)^2} \left(   
  \frac{\partial }{ \partial \theta' } - \frac{\varepsilon r' \rho'(\theta')}{1+ \varepsilon \rho(\theta')} \frac{\partial }{ \partial r'}   \right)^2 u_\varepsilon   = 0. 
\end{align*}
Multiplying both sides by $\left(1+ \varepsilon \rho(\theta') \right)^2$ and dropping the primes on the transformed variables yields 
\[  r^{-1} \partial_r r \partial_r u_\varepsilon +  r^{-2}  \left(   \partial_\theta - \varepsilon r (1+ \varepsilon \rho(\theta) )^{-1} \rho'(\theta) \partial_r   \right)^2 u_\varepsilon   = 0. \]
Expanding the operator in the second term on the left hand side, we obtain
\begin{align*}
&\left( r^{-1} \partial_r r \partial_r  + r^{-2} \partial_\theta^2 \right) u_\varepsilon 
  -   \varepsilon r^{-1} \left( 1 + \varepsilon\rho(\theta) \right)^{-1}   \left( 2 \rho'(\theta)  \partial_\theta + \rho''(\theta) \right)  \partial_r u_\varepsilon  \\
 \nonumber
& \qquad \qquad \qquad \qquad \qquad  
+ \varepsilon^2 r^{-1} \left( 1 + \varepsilon\rho(\theta) \right)^{-2}  \left(\rho'(\theta) \right)^2  \partial_r \left( 2 + r \partial_r \right) u_\varepsilon   = 0.
\end{align*}
Again multiplying both sides by $\left(1+ \varepsilon \rho(\theta) \right)^2$, we obtain 
the transformed Laplace equation, 
\begin{subequations} \label{e:Laplace-transformed}
\begin{align}
\label{e:Laplace-transformed-a}
 & \Delta u_\varepsilon 
   =    \varepsilon  L_1 u_\varepsilon +  \varepsilon^2 L_2 u_\varepsilon \\
& u_\varepsilon(1,\theta) = \xi(\theta) . 
\end{align}
\end{subequations}
where
\begin{align*}
\Delta & = \left( r^{-1} \partial_r r \partial_r  + r^{-2} \partial_\theta^2 \right) \\
L_1 & = 2\rho'(\theta)r^{-1}\partial_\theta \partial_r   
  + \rho''(\theta) r^{-1}\partial_r  - 2\rho(\theta) \left[ \partial_r^2 + r^{-1} \partial_r  + r^{-2}\partial_\theta^2 \right]  \\ 
L_2 & = 2 \rho(\theta) \rho'(\theta) r^{-1} \partial_\theta \partial_r + \rho(\theta) \rho''(\theta) r^{-1} \partial_r   - (\rho'(\theta))^2 \partial_r^2 \\
& \qquad -2 (\rho'(\theta))^2 r^{-1} \partial_r  
- \rho^2(\theta) \left[ \partial_r^2 + r^{-1} \partial_r   + r^{-2} \partial_\theta^2 \right].   
\end{align*}

We formally expand the solution, $u_\varepsilon$, in powers of $\varepsilon$, 
\begin{equation} \label{e:u-expansion} 
u_\varepsilon(r,\theta) = \sum_{n=0}^\infty \varepsilon^n u_n(r,\theta). 
\end{equation}
Next, we collect terms in powers of  $\varepsilon$. At $O(\varepsilon^0)$, we obtain 
\begin{align*}
&  \Delta  u_0(r,\theta) = 0  \\
& u_0(1, \theta) = \xi(\theta). 
\end{align*}
At $O(\varepsilon^n)$ for $n> 0$, we obtain 
\begin{align*}
& \Delta  u_n(r,\theta) 
=  L_1 u_{n-1} + L_2 u_{n-2}  \\
& u_n(1, \theta) = 0. 
\end{align*}

We next show that there exists a unique solution of \eqref{e:Laplace-transformed} of the form in \eqref{e:u-expansion}. 
The following Lemma is analogous to \cite[Lemma 4]{Nicholls_2004}.

\begin{lem}
\label{lem21}
For $s\in \mathbb N$, there is a constant $K_0 > 0$ such that for any $F \in H^{s-1}(D)$ and $\xi \in H^{s+ \frac 1 2}(S^1)$, the solution of 
\begin{align*}
    &\Delta w(r,\theta) = F(r,\theta) && (r,\theta) \in D \\
    &w(1,\theta) = \xi(\theta) && \theta \in S^1
\end{align*}
satisfies
\[
\| w \|_{H^{s+1}(D)} \leq K_0 \left( \| F \|_{H^{s-1}(D)}  + \| \xi \|_{H^{s+ \frac 1 2}(S^1)} \right). 
\]
\end{lem}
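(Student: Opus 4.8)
The plan is to reduce this elliptic regularity estimate to a standard fact about Poisson's equation on the disk by separating the boundary data from the interior forcing. First I would split $w = w_1 + w_2$, where $w_1$ solves $\Delta w_1 = 0$ in $D$ with $w_1(1,\theta) = \xi(\theta)$ (the harmonic extension of the boundary data), and $w_2$ solves $\Delta w_2 = F$ in $D$ with $w_2(1,\theta) = 0$ (the zero-boundary-data Poisson problem). By linearity and the triangle inequality it suffices to bound each piece: $\|w_1\|_{H^{s+1}(D)} \leq K_0' \|\xi\|_{H^{s+\frac12}(S^1)}$ and $\|w_2\|_{H^{s+1}(D)} \leq K_0'' \|F\|_{H^{s-1}(D)}$, and then take $K_0 = \max(K_0', K_0'')$ (up to a factor of $2$).

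For $w_2$, this is precisely the standard global elliptic regularity estimate for the Dirichlet problem on a smooth bounded domain: since the unit disk has smooth boundary, $\Delta \colon H^{s+1}(D) \cap H^1_0(D) \to H^{s-1}(D)$ is an isomorphism for each $s \geq 1$ (integer, or indeed real), and the desired bound is the norm of its inverse. One can cite the classical interior-plus-boundary regularity theory (e.g.\ Gilbarg--Trudinger or Evans) or, to stay self-contained and consistent with the Fourier-analytic setup used elsewhere in the paper, argue directly: expand $F$ and $w_2$ in Fourier modes $e^{ik\theta}$, so that each Fourier coefficient $\widehat{w_2}(k)(r)$ solves an ODE $(r^{-1}(r (\widehat{w_2})')' - k^2 r^{-2}\widehat{w_2})=\widehat{F}(k)(r)$ with $\widehat{w_2}(k)(1)=0$, solve via the Green's function for that ODE, and sum up the weighted norms, tracking the $\langle k\rangle$ weights. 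For $w_1$, the harmonic extension of $\xi = \frac{1}{\sqrt{2\pi}}\sum \hat\xi(k)e^{ik\theta}$ is explicitly $w_1(r,\theta) = \frac{1}{\sqrt{2\pi}}\sum \hat\xi(k) r^{|k|} e^{ik\theta}$, and a direct computation of $\|w_1\|_{H^{s+1}(D)}^2$ in polar coordinates reduces to showing $\int_0^1 r^{2|k|} (\text{polynomial in }k\text{ of degree }2s+2)\, r\,dr$ is bounded by $\langle k\rangle^{2s+1}$ uniformly in $k$, which follows because $\int_0^1 r^{2|k|+2j+1}dr = \frac{1}{2|k|+2j+2} \sim \langle k\rangle^{-1}$ absorbs one power of $k$.

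The main obstacle — or really the main bookkeeping burden — is making precise what $H^{s+1}(D)$ means and verifying the $w_1$ (harmonic part) estimate with the correct trade of one power of $k$ against the radial integral; this is where the $+\frac12$ shift in the boundary Sobolev index comes from, and it must be checked that differentiating $r^{|k|}$ up to $s+1$ times (mixing $\partial_r$ and $r^{-1}\partial_\theta$ as dictated by the polar-coordinate expression for the $H^{s+1}$ norm) never produces more than $\langle k\rangle^{s+1}$ growth after the radial integration contributes its $\langle k\rangle^{-1/2}$ in $L^2$-norm. For the Poisson part $w_2$, the only subtlety is handling the $k=0$ mode separately (where the ODE degenerates slightly) and confirming uniformity of the Green's-function bounds in $k$; alternatively one sidesteps all of this by simply invoking the classical elliptic regularity theorem for the Dirichlet Laplacian on a smooth domain, which is the route I would actually take in the write-up, relegating the Fourier computation to the harmonic piece where the explicit formula makes it painless.
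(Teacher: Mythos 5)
Your proposal matches the paper's proof essentially step for step: the same splitting $w = w_1 + w_2$ into the harmonic extension of $\xi$ and the zero-boundary-data Poisson solution, the same explicit Fourier formula $w_1 = \frac{1}{\sqrt{2\pi}}\sum_k \hat\xi(k)\,r^{|k|}e^{ik\theta}$ with the radial integrals trading one power of $\langle k\rangle$ to produce the $H^{s+1/2}$ boundary norm, and the same recombination via the triangle inequality. The only cosmetic difference is in the Poisson piece: the paper does an explicit variational/Poincar\'e argument for $s=0$ (test against $\bar v$, use $H^1_0$--$H^{-1}$ duality) and then states ``the proof for $s\geq 1$ is similar,'' whereas you invoke classical elliptic regularity for the Dirichlet problem directly, which is an equivalent and equally standard route.
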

\begin{proof}
We will prove the result for $s=0$. Since $\xi \in H^{\frac 1 2 }(S^1)$, we have the Fourier series
\[
\xi(\theta) = \frac{1}{\sqrt{2 \pi}} \sum_{k \in \mathbb Z} \hat{\xi}(k) e^{i k \theta}, 
\qquad \textrm{where} \ \ 
\hat{\xi}(k) = \frac{1}{\sqrt{2 \pi}} \int_0^{2 \pi} \xi(\theta) e^{- i k \theta } \ d \theta.
\]
Setting $v = w - \Phi$, where 
$\Phi(r,\theta) = \frac{1}{\sqrt{2 \pi}} \sum_{k \in \mathbb Z} \hat{\xi}(k) r^{|k|} e^{i k \theta}$,
we have that 
\begin{align*}
&\Delta \Phi(r,\theta) = 0 && (r,\theta) \in D  \\
&\Phi(1,\theta) = \xi(\theta) 
\end{align*} 
and 
\begin{subequations} \label{e:DirPartw}
\begin{align}
    \label{e:DirPartw-a}
    &\Delta v(r,\theta) = F(r,\theta) && (r,\theta) \in D \\
    \label{e:DirPartw-b}
    &v(1,\theta) = 0. 
\end{align}
\end{subequations}
Using $\hat\Phi(r,k) = \hat \xi (k) r^{|k|}$, a straightforward integration yields
\begin{subequations} \label{e:bndPhi2d}
\begin{align}
\| \Phi \|^2_{H^1(D)} 
&= \sum_{k\in \mathbb Z} \int_0^1 \left[ \left( 1 + r^{-2} |k|^2 \right) |\hat \Phi(r,k)|^2 + | \partial_r \hat \Phi(r,k) |^2\right] \ r dr \\
&= \sum_{k\in \mathbb Z} |\hat \xi (k)|^2 \int_0^1 \left[ \left( 1 + 2 r^{-2} |k|^2 \right)  r^{2 |k|} \right]  \ r dr \\
&\leq C_{\frac 1 2}^2 \sum_{k\in \mathbb Z} \langle k \rangle |\hat \xi (k)|^2\\ 
&= C_{\frac 1 2}^2 \|\xi\|^2_{H^{\frac 1 2}(S^1)},
\end{align}
\end{subequations}
for some constant $C_{\frac 1 2} > 0$. 

Multiplying \eqref{e:DirPartw-a} by $\overline v$, integrating by parts, and using \eqref{e:DirPartw-b} yields
\[ 
\| v \|_{H_0^1(D)}^2 = \int_D \nabla v \cdot \nabla \overline{v} = - \int_D F \overline{v} .
\]
By the duality of $H_0^1(D)$ and $H^{-1}(D)$, we have
$\int_D | F \overline{v}|  \leq \| F \|_{H^{-1}(D)} \| v \|_{H_0^1(D)}$. 
Since $v \in H_0^1(D)$, by the Poincar\'e inequality, there exists a constant $C_D$ such that $ \| v \|_{H^1(D)} \leq  C_D \| v \|_{H^1_0(D)}$ and we conclude that 
\begin{equation} \label{e:bndv2d}
\| v \|_{H^1(D)} \leq  C_D \| v \|_{H_0^1(D)} \leq C_D \| F \|_{H^{-1}(D)}.
\end{equation}

Using the decomposition $w = v + \Phi$ and using \eqref{e:bndPhi2d} and \eqref{e:bndv2d}, we obtain
\[ 
\| w \|_{H^1(D)} \leq \| v \|_{H^1(D)} + \| \Phi \|_{H^1(D)} 
\leq C_D \| F \|_{H^{-1}(D)} + C_{\frac 1 2} \| \xi \|_{H^{\frac 1 2}(S^1)}.
\]
Taking $K_0 = \max\{ C_{\frac 1 2}, C_D \}$ yields the desired result for $s=0$. The proof for $s\geq 1$ is similar. 
\end{proof} 

The next Lemma  will be used to prove the inductive step in the proof of Theorem \ref{e:Thm23} and is analogous to \cite[Lemma 5]{Nicholls_2004}. 
In the proof, we use the following result \cite{nicholls2001new,Nicholls_2004}. 
For 
$\varepsilon \geq 0$, 
$s\in \mathbb N$, 
$f \in C^s(S^{d-1})$, 
$u \in H^s(B^d)$, 
$g\in C^{s + \frac 1 2 + \delta}(S^{d-1})$, and 
$\mu \in H^{s + \frac 1 2}(S^{d-1})$, 
there exists a constant $M = M(s,d)$ so that  
\begin{subequations} \label{e:Nicholls}
\begin{align}
    \| f u \|_{H^s} &\leq M(s,d) \ |f|_{C^s} \ \| u \|_{H^s} \\
    \| g \mu \|_{H^{s+\frac 1 2}} &\leq M(s,d) \ |g|_{C^{s+\frac 1 2 + \delta}}  \ \| \mu \|_{H^{s+ \frac 1 2}}.
\end{align}
\end{subequations}

\begin{lem}
\label{lem22}
Let $s\in \mathbb N$ and let $\rho \in C^{s+2}(S^1)$.  Assume that $K_1$ and $B$ are constants so that 
$$
\| u_n \|_{H^{s+2}(D)} \leq K_1 B^n
\qquad \qquad \textrm{for all } \ \  n < N.
$$
If $B >|\rho|_{C^{s+2}}$, 
then there exists a $C_0$ such that 
\begin{align*}
\| L_1 u_{N-1} \|_{H^s(D)} &\leq K_1 |\rho|_{C^{s+2}} C_0 B^{N-1} \\
\| L_2 u_{N-2} \|_{H^s(D)} &\leq K_1 |\rho|_{C^{s+2}} C_0 B^{N-1}.
\end{align*}
\end{lem}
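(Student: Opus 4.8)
The plan is a term-by-term estimate, leaning on two structural facts: every coefficient in $L_1$ is one of $\rho,\rho',\rho''$, while every coefficient in $L_2$ is a product of two of $\rho,\rho'$. Writing $L_1 = \sum_j a_j(\theta)\,\mathcal D_j$ and $L_2 = \sum_j b_j(\theta)\,\mathcal E_j$, the $a_j$ range over $\{-2\rho,\,2\rho',\,\rho''\}$, the $b_j$ over $\{-\rho^2,\,2\rho\rho',\,\rho\rho'',\,-(\rho')^2,\,-2(\rho')^2\}$, and each $\mathcal D_j$, $\mathcal E_j$ is one of the four operators $\Delta = \partial_r^2 + r^{-1}\partial_r + r^{-2}\partial_\theta^2$, $r^{-1}\partial_\theta\partial_r$, $r^{-1}\partial_r$, $\partial_r^2$, all of order at most two (this is how the brackets $[\partial_r^2+r^{-1}\partial_r+r^{-2}\partial_\theta^2]$ in $L_1,L_2$ should be read). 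Granting a mapping bound $\|\mathcal D u\|_{H^s(D)} \le C'\|u\|_{H^{s+2}(D)}$ for each such operator $\mathcal D$, the multiplication inequality \eqref{e:Nicholls} then gives $\|a_j\,\mathcal D_j u\|_{H^s(D)} \le M(s,2)\,|a_j|_{C^s}\,\|\mathcal D_j u\|_{H^s(D)} \le M(s,2)\,C'\,|a_j|_{C^s}\,\|u\|_{H^{s+2}(D)}$, and the same for $b_j\,\mathcal E_j$.

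Next I would bound the coefficient norms. Since $|\rho^{(m)}|_{C^s} \le |\rho|_{C^{s+m}} \le |\rho|_{C^{s+2}}$ for $m \le 2$, every $|a_j|_{C^s} \le |\rho|_{C^{s+2}}$; and since $C^s(S^1)$ satisfies $|fg|_{C^s} \le c_s|f|_{C^s}|g|_{C^s}$, every $|b_j|_{C^s} \le c_s\,|\rho|_{C^{s+2}}^2$. Summing over the finitely many terms produces constants $C_1,C_2$ depending only on $s$ with $\|L_1 u_{N-1}\|_{H^s(D)} \le C_1\,|\rho|_{C^{s+2}}\,\|u_{N-1}\|_{H^{s+2}(D)}$ and $\|L_2 u_{N-2}\|_{H^s(D)} \le C_2\,|\rho|_{C^{s+2}}^2\,\|u_{N-2}\|_{H^{s+2}(D)}$. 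Inserting the inductive hypothesis $\|u_n\|_{H^{s+2}(D)} \le K_1 B^n$, the first bound is already of the stated form, $\|L_1 u_{N-1}\|_{H^s(D)} \le C_1\,|\rho|_{C^{s+2}}\,K_1 B^{N-1}$. For the second, the surplus power of $|\rho|_{C^{s+2}}$ — present because $L_2$ is quadratic in $\rho$ — is exactly what pays for the missing power of $B$: using $B > |\rho|_{C^{s+2}}$ we get $|\rho|_{C^{s+2}}^2 B^{N-2} < |\rho|_{C^{s+2}} B^{N-1}$, hence $\|L_2 u_{N-2}\|_{H^s(D)} \le C_2\,|\rho|_{C^{s+2}}\,K_1 B^{N-1}$. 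Taking $C_0 = \max\{C_1,C_2\}$ completes the proof.

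The step I expect to be the real work is the mapping bound $\|\mathcal D u\|_{H^s(D)} \le C'\|u\|_{H^{s+2}(D)}$, because $r^{-1}\partial_r$ and $r^{-1}\partial_\theta\partial_r$ carry coefficients singular at the origin, and $\partial_r^2$ (in Cartesian form) carries coefficients that are not smooth there. By contrast the $\Delta$-terms are free: $\Delta$ has constant coefficients in Cartesian variables, so it maps $H^{s+2}(D)$ to $H^s(D)$ at once. The difficulty is thus confined to the non-Laplacian pieces, and I would handle these using the structure particular to the scheme: by Lemma~\ref{lem21} each $u_n$ is the solution of a Poisson problem on $D$ with (for $n\ge 1$) zero Dirichlet data, so the weighted radial derivatives can be estimated mode-by-mode in the Fourier-in-$\theta$ expansion, the regularity of the radial profiles absorbing the weights $r^{-1},r^{-2}$; this is where the precise choice of norm on $H^s(D)$ and the analogue of \cite[Lemma 5]{Nicholls_2004} come in. The rest — counting terms, the $C^s$-algebra bound, and the bookkeeping of the power of $B$ — is routine.
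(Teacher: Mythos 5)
Your argument follows the same outline as the paper's own proof: term-by-term estimation of $L_1 u_{N-1}$ and $L_2 u_{N-2}$ in $H^s(D)$, the multiplication inequality \eqref{e:Nicholls} to peel off the $C^s$ norms of $\rho$ and its derivatives, the inductive hypothesis, and the hypothesis $B>|\rho|_{C^{s+2}}$ to trade one power of $|\rho|_{C^{s+2}}$ for a power of $B$ in the $L_2$ estimate. The only bookkeeping difference is the decomposition: before estimating, the paper rewrites $L_1$ and $L_2$, replacing $\rho''r^{-1}\partial_r$ by $-\rho'' r^{-2}\partial_\theta^2-\rho''\partial_r^2+\rho''\Delta$ so that the Laplacian appears as a standalone operator, whereas you keep $r^{-1}\partial_r$ as one of your elementary operators $\mathcal{D}_j$. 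That difference is cosmetic.

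The nontrivial point is exactly the one you flag: the mapping bound $\|\mathcal{D}u\|_{H^s(D)}\le C'\|u\|_{H^{s+2}(D)}$ for the individual polar operators. The paper disposes of it in one clause (``we have used that all operators acting on $u_{N-1}$ are second order''), and you grant it and defer the justification; but as stated it is not true for the pieces with coefficients singular at the origin. For instance $u(r,\theta)=r\cos\theta=x$ belongs to $H^{s+2}(D)$ for every $s$, yet $r^{-1}\partial_\theta\partial_r u=-\sin\theta/r$ and $r^{-1}\partial_r u=\cos\theta/r$ are not in $L^2(D)$; converting $\partial_r^2$ and $r^{-2}\partial_\theta^2$ to Cartesian form exposes the same $O(r^{-1})$ first-order part. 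Your proposed remedy---argue mode-by-mode in $\theta$ using that each $u_n$ solves a Poisson problem---does not by itself close the gap, because the obstruction already appears at the base of the induction: $u_0$ is the harmonic extension of $\xi$, so it is precisely a solution of the model problem in Lemma~\ref{lem21}, yet $L_1u_0\notin L^2(D)$ once $\xi$ has a nonvanishing first Fourier mode and $\rho$ is nonconstant (the terms $2\rho' r^{-1}\partial_\theta\partial_r u_0$ and $\rho''r^{-1}\partial_r u_0$ each contribute an $O(r^{-1})$ piece that does not cancel). Closing this requires an ingredient absent from both your sketch and the paper's proof---for example, a cancellation among the singular parts of $L_1$ not visible term by term, a separate treatment of the low Fourier modes of $u_n$, or a weighted Sobolev framework near the origin.
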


\begin{proof}
We begin by rewriting
\begin{align*}
L_1 u_{N-1} = & \ 2\rho'(\theta)r^{-1}\partial_\theta \partial_r u_{N-1}  
  - \rho''(\theta) r^{-2}\partial_{\theta}^2 u_{N-1} - \rho''(\theta) \partial_r^2 u_{N-1} \\
  & \ + (\rho''(\theta) - 2\rho(\theta)) \left[ \partial_r^2 + r^{-1} \partial_r  + r^{-2}\partial_\theta^2 \right]u_{N-1}
\end{align*}
as well as
\begin{align*}
L_2 u_{N-2} = & 2 \rho(\theta) \rho'(\theta) r^{-1} \partial_\theta \partial_r u_{N-2} - (\rho(\theta) \rho''(\theta) -2 (\rho'(\theta))^2) \left[r^{-2} \partial_\theta^2 + \partial_r^2 \right] u_{N-2}   - (\rho'(\theta))^2 \partial_r^2 u_{N-2} \\
&  
+ (\rho(\theta) \rho''(\theta) -2 (\rho'(\theta))^2 - \rho^2(\theta)) \left[ \partial_r^2 + r^{-1} \partial_r   + r^{-2} \partial_\theta^2 \right] u_{N-2}.
\end{align*}

First, we measure $L_1 u_{N-1}$ in $H^{s}(D)$ and use the triangle inequality and \eqref{e:Nicholls} to obtain:
\begin{align*}
\| L_1 u_{N-1} \|_{H^s} & \leq \| 2\rho'(\theta)r^{-1}\partial_\theta \partial_r u_{N-1} \|_{H^s}  
+ \| \rho''(\theta) r^{-2}\partial_{\theta}^2 u_{N-1} \|_{H^s} + \| \rho''(\theta) \partial_r^2 u_{N-1}\|_{H^s} \\
& \quad + \|(\rho''(\theta) - 2\rho(\theta)) \Delta u_{N-1}\|_{H^s} \\
& \leq 2M(s)|\rho|_{C^{s+1}} \|r^{-1}\partial_\theta \partial_r u_{N-1} \|_{H^s}  
+ M(s)|\rho|_{C^{s+2}} \| r^{-2}\partial_{\theta}^2 u_{N-1} \|_{H^s} +  M(s)|\rho|_{C^{s+2}}\| \partial_r^2 u_{N-1}\|_{H^s} \\
& \quad + M(s)|\rho|_{C^{s+2}}\| \Delta u_{N-1}\|_{H^s} + 2M(s)|\rho|_{C^s} \| \Delta u_{N-1}\|_{H^s} \\
& \leq 2M(s)|\rho|_{C^{s+1}} \| u_{N-1} \|_{H^{s+2}}  
+ M(s)|\rho|_{C^{s+2}} \| u_{N-1} \|_{H^{s+2}} +  M(s)|\rho|_{C^{s+2}}\| u_{N-1}\|_{H^{s+2}} \\
& \quad + M(s)|\rho|_{C^{s+2}}\|u_{N-1}\|_{H^{s+2}} + 2M(s)|\rho|_{C^s} \|u_{N-1}\|_{H^{s+2}}\\
& \leq K_1 |\rho|_{C^{s+2}} C_0 B^{N-1}.
\end{align*}
Here, in the third inequality, we have used that all operators acting on $u_{N-1}$ are second order. 
Similarly, we estimate $L_2 u_{N-2}$ in $H^s(D)$:
\begin{align*}
\| L_2 u_{N-2} \|_{H^s} 
& \leq 2 \|\rho \rho' r^{-1} \partial_\theta \partial_r u_{N-2}\|_{H^s} + \| (\rho \rho'' -2 (\rho')^2) \left[r^{-2} \partial_\theta^2 + \partial_r^2 \right] u_{N-2}\|_{H^s}   + \| (\rho')^2 \partial_r^2 u_{N-2}\|_{H^s} \\
& \quad + \left\| \left(\rho \rho'' -2 (\rho')^2 - \rho^2 \right) \Delta u_{n-2} \right\|_{H^s} \\
& \leq 2 M(s)|\rho|_{C^s} |\rho|_{C^{s+1}} \| r^{-1} \partial_\theta \partial_r u_{N-2}\|_{H^s} + M(s)\left(|\rho|_{C^s} |\rho|_{C^{s+2}} + 2|\rho|^2_{C^{s+1}}\right) \|r^{-2}\partial_\theta^2 u_{N-2}\|_{H^s} \\
& \quad + M(s) \left(|\rho|_{C^s} |\rho|_{C^{s+2}} + |\rho|^2_{C^{s+1}}\right) \|\partial_r^2 u_{N-2}\|_{H^s}\\
& \quad + M(s) \left(|\rho|_{C^s} |\rho|_{C^{s+2}} + 2|\rho|^2_{C^{s+1}} + |\rho|^2_{C^s} \right) \| \Delta u_{N-2}\|_{H^s} \\
& \leq 2 M(s)|\rho|_{C^s} |\rho|_{C^{s+1}} \| u_{N-2}\|_{H^{s+2}} + M(s)\left(|\rho|_{C^s} |\rho|_{C^{s+2}} + 2|\rho|^2_{C^{s+1}}\right) \| u_{N-2}\|_{H^{s+2}} \\
& \quad + M(s) \left(|\rho|_{C^s} |\rho|_{C^{s+2}} + |\rho|^2_{C^{s+1}} \right) \| u_{N-2}\|_{H^{s+2}}\\
& \quad + M(s) \left(|\rho|_{C^s} |\rho|_{C^{s+2}} + 2|\rho|^2_{C^{s+1}} + |\rho|^2_{C^s} \right) \| u_{N-2}\|_{H^{s+2}} \\
& \leq K_1 |\rho|_{C^{s+2}}C_0 B^{N-1}.
\end{align*}
\end{proof}

The following Theorem justifies the convergnece of \eqref{e:u-expansion} for sufficiently small $\varepsilon >0$ and is analogous to \cite[Theorem 3]{Nicholls_2004}.
\begin{thm} \label{e:Thm23}
Given $s \in \mathbb N$, if $\rho \in C^{s+2}(S^1)$ and $\xi \in H^{s+\frac 3 2}(S^1)$, there exists constants $C_0$ and $K_0$ and a unique solution of \eqref{e:Laplace-transformed} such that
\begin{equation}
\label{u-bound2d}
\|u_n \|_{H^{s+2}(D)} \leq K_0 \| \xi \|_{H^{s+ \frac 3 2}(S^1)} B^n
\end{equation}
for any $B > 2 K_0 C_0 | \rho|_{C^{s+2}}$. 
\end{thm}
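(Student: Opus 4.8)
The plan is to run an induction on $n$ using the two Lemmas already established. First I would handle the base case $n=0$ and $n=1$ directly: since $u_0$ solves $\Delta u_0 = 0$ with $u_0(1,\theta) = \xi(\theta)$, Lemma~\ref{lem21} (applied with $F=0$ and $s$ replaced by $s+1$, so that the conclusion is in $H^{s+2}(D)$) gives $\|u_0\|_{H^{s+2}(D)} \le K_0 \|\xi\|_{H^{s+3/2}(S^1)}$, which is \eqref{u-bound2d} with $B^0 = 1$. For $n=1$, $u_1$ solves $\Delta u_1 = L_1 u_0$ with zero boundary data; bounding $\|L_1 u_0\|_{H^s(D)}$ via the same estimates used in the proof of Lemma~\ref{lem22} (now applied with the known bound on $u_0$ in place of the inductive hypothesis) and invoking Lemma~\ref{lem21} again yields $\|u_1\|_{H^{s+2}(D)} \le K_0 C_0 |\rho|_{C^{s+2}} \|\xi\|_{H^{s+3/2}(S^1)}$, which is $\le K_0 \|\xi\|_{H^{s+3/2}(S^1)} B$ as soon as $B \ge C_0 |\rho|_{C^{s+2}}$.

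For the inductive step, I would fix $N \ge 2$, assume \eqref{u-bound2d} holds for all $n < N$, and set $K_1 = K_0 \|\xi\|_{H^{s+3/2}(S^1)}$ so that the hypothesis of Lemma~\ref{lem22} is exactly met (using $B > |\rho|_{C^{s+2}}$, which follows from $B > 2K_0 C_0 |\rho|_{C^{s+2}}$ once we note $2K_0 C_0 \ge 1$, or simply enlarge $C_0$ to ensure this). Lemma~\ref{lem22} then gives
\[
\|L_1 u_{N-1} + L_2 u_{N-2}\|_{H^s(D)} \le \|L_1 u_{N-1}\|_{H^s(D)} + \|L_2 u_{N-2}\|_{H^s(D)} \le 2 K_1 |\rho|_{C^{s+2}} C_0 B^{N-1}.
\]
Since $u_N$ solves $\Delta u_N = L_1 u_{N-1} + L_2 u_{N-2}$ with $u_N(1,\theta)=0$, Lemma~\ref{lem21} (again with the shift $s \mapsto s+1$ and $\xi = 0$) yields
\[
\|u_N\|_{H^{s+2}(D)} \le K_0 \cdot 2 K_1 |\rho|_{C^{s+2}} C_0 B^{N-1} = \left( 2 K_0 C_0 |\rho|_{C^{s+2}} \right) K_1 B^{N-1}.
\]
Choosing $B > 2 K_0 C_0 |\rho|_{C^{s+2}}$ makes the prefactor $2K_0 C_0 |\rho|_{C^{s+2}} < B$, so the right-hand side is $\le K_1 B^N = K_0 \|\xi\|_{H^{s+3/2}(S^1)} B^N$, completing the induction.

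Finally, uniqueness: if $\{u_n\}$ and $\{\tilde u_n\}$ both satisfy the hierarchy, their difference $w_n = u_n - \tilde u_n$ satisfies $\Delta w_0 = 0$, $w_0(1,\cdot)=0$, hence $w_0 = 0$ by Lemma~\ref{lem21}; inductively $\Delta w_n = L_1 w_{n-1} + L_2 w_{n-2} = 0$ with zero boundary data, so $w_n = 0$ for all $n$. The termwise bounds then show $\sum \varepsilon^n u_n$ converges in $H^{s+2}(D)$ for $|\varepsilon| < B^{-1}$ and solves \eqref{e:Laplace-transformed}, and any other solution expandable in powers of $\varepsilon$ must have these coefficients. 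The only real subtlety — which I would flag as the main bookkeeping obstacle rather than a genuine difficulty — is keeping the index shift consistent: Lemma~\ref{lem21} is stated with $H^{s-1} \to H^{s+1}$ regularity, so to land in $H^{s+2}(D)$ one applies it at level $s+1$, which requires the forcing terms $L_1 u_{N-1}, L_2 u_{N-2}$ to lie in $H^{s}(D)$ — precisely what Lemma~\ref{lem22} delivers — and requires $\rho \in C^{s+2}$, matching the hypothesis exactly. One must also absorb the factor $2$ from the triangle inequality and any constant needed to guarantee $2K_0C_0 \ge 1$ into the single constant $C_0$.
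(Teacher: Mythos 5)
Your proposal follows the paper's proof exactly: an induction where Lemma~\ref{lem21} (applied at the shifted level $s+1$) handles the base case, and the inductive step combines Lemma~\ref{lem21} with Lemma~\ref{lem22} to absorb the factor $2K_0C_0|\rho|_{C^{s+2}}$ into $B$. Your added care --- spelling out the $s\mapsto s+1$ shift, the separate $n=1$ base case (avoiding the implicit $u_{-1}=0$ convention), the condition $2K_0C_0\geq 1$ needed so that $B>2K_0C_0|\rho|_{C^{s+2}}$ implies the hypothesis $B>|\rho|_{C^{s+2}}$ of Lemma~\ref{lem22}, and the uniqueness argument --- fills in bookkeeping the paper leaves tacit, but does not change the route.
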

\begin{proof}
We proceed by induction.  For $n=0$, we use Lemma \ref{lem21} to we see
\begin{equation*} 
\| u_0 \|_{H^{s+2}} \leq K_0 \| \xi \|_{H^{s+ \frac 3 2}} B^0,
\end{equation*}
as desired to show \eqref{u-bound2d}. We now define $K_1=K_0\| \xi \|_{H^{s+ \frac{3}{2}}(S^1)}$ for the remainder of the proof to be used in Lemma \ref{lem22}. 

Suppose inequality \eqref{u-bound2d} holds for $n<N$.  Then by Lemma \ref{lem21},
\begin{equation*}
    \| u_N \|_{H^{s+2}} \leq K_0 \left( \| L_1 u_{N-1}\|_{H^s} + \| L_2 u_{N-2}\|_{H^s} \right).
\end{equation*}
By Lemma \ref{lem22}, we may bound $\|L_1 u_{N-1}\|_{H^s}$ and $\|L_2 u_{N-2}\|_{H^s}$ so that
\begin{align*}
    \| u_N \|_{H^{s+2}} & \leq 2K_0 K_1 C_0 |\rho|_{C^{s+2}} B^{N-1} \\
    & = 2 K_0^2 \| \xi \|_{H^{s+ \frac 3 2}(S^1)}C_0 |\rho|_{C^{s+2}} B^{N-1} \\
    & \leq K_0 \| \xi \|_{H^{s+ \frac 3 2}(S^1)} B^N
\end{align*}
provided $B> 2K_0 C_0 |\rho|_{C^{s+2}}$.
\end{proof}

\subsection{Proof of Theorem~\ref{t:Thm1} in two dimensions: Analyticity of the Dirichlet to Neumann operator} 
\label{s:DNO-anal-2D}

The Dirichlet to Neumann operator (DNO), $G \colon H^{s+1}(\partial \Omega_\varepsilon ) \to H^{s}(\partial \Omega_\varepsilon )$, is given by 
\begin{align*}
G \xi &= \left[ 1 + 2 \varepsilon \rho(\theta) + \varepsilon^2 \left(\rho^2(\theta) + (\rho'(\theta) \right)^2 \right]^{-\frac{1}{2}} 
\left[ \left(1+\varepsilon \rho(\theta) \right) \frac{\partial v}{\partial r}  - \frac{\varepsilon \rho'(\theta)}{1+\varepsilon \rho(\theta)}  \frac{\partial v}{\partial \theta} \right], 
\end{align*}
where $v$ is the harmonic extension of $\xi$ from $\partial \Omega_\varepsilon $ to $\Omega_\varepsilon $, satisfying \eqref{e:Laplace2d}.
Making the change of coordinates given in \eqref{e:ChangeVariables2d}, we obtain the transformed DNO, 
$G(1+ \varepsilon  \rho) \colon H^{s+1}(S^1) \to H^{s}(S^1)$, given by 
\begin{align*}
G\left(1+\varepsilon \rho(\theta') \right) \xi 
&= \left[ 1 + 2 \varepsilon \rho(\theta') + \varepsilon^2 \left(\rho^2(\theta') + (\rho'(\theta') )^2 \right) \right]^{-\frac{1}{2}} 
\left[ \left(1 + \frac{\varepsilon^2 (\rho'(\theta'))^2}{\left(1+ \varepsilon \rho(\theta') \right)^2} r'   \right)   \frac{\partial u_\varepsilon}{\partial r'}
- \frac{\varepsilon \rho'(\theta')}{1+\varepsilon \rho(\theta')}  \frac{\partial u_\varepsilon}{\partial \theta'} \right] \\
&= M_\rho(\varepsilon) \hat{G}_{\rho, \varepsilon}\xi ,
\end{align*}
where $u_\varepsilon$ satisfies \eqref{e:Laplace-transformed} and
\begin{align*}
M_\rho(\varepsilon ) & = \left[ 1 + 2 \varepsilon \rho(\theta') + \varepsilon^2 \left(\rho^2(\theta') + (\rho'(\theta') \right)^2 \right]^{-\frac{1}{2}}, \\
\hat{G}_{\rho, \varepsilon}\xi &= \left[ \left(1 + \frac{\varepsilon^2 (\rho'(\theta'))^2}{\left(1+ \varepsilon \rho(\theta') \right)^2} r'   \right)   \frac{\partial u_\varepsilon}{\partial r'}
- \frac{\varepsilon \rho'(\theta')}{1+\varepsilon \rho(\theta')}  \frac{\partial u_\varepsilon}{\partial \theta'} \right].
\end{align*}
Since $M_\rho(\varepsilon)$ is clearly analytic in $\varepsilon$, we need only show the analyticity of $\hat{G}(1+\varepsilon \rho(\theta'))$.  Dropping the prime notation on the new variables, we obtain
\begin{align*}
\left(1+ \varepsilon \rho(\theta) \right)^2 \hat{G}_{\rho, \varepsilon} \xi 
&= \left[ \left( \left(1+ \varepsilon \rho(\theta) \right)^2 + \varepsilon^2 (\rho'(\theta))^2   \right)   \partial_r u - \varepsilon \left(1+ \varepsilon \rho(\theta) \right) \rho'(\theta)  \partial_\theta u  \right]. \\
\end{align*}
We expand the non-normalized DNO, $\hat{G}_{\rho, \varepsilon}$, as a power series in $\varepsilon$
\begin{align}
\label{DNOanalytic}
\hat{G}_{\rho, \varepsilon}\xi = \sum \limits_{n=0}^\infty \varepsilon^n \hat{G}_{n,\rho}\xi, 
\end{align}
which yields the following recursive formula:
\begin{align*}
    \hat{G}_{\rho, n}\xi &= \partial_r u_n + 2\rho \partial_r u_{n-1} + \left( (\rho')^2 + \rho^2 \right) \partial_r u_{n-2} - \rho' \partial_\theta u_{n-1} - \rho \rho' \partial_\theta u_{n-2} -2\rho \hat{G}_{\rho, n-1}\xi - \rho^2 \hat{G}_{\rho, n-2}\xi.
\end{align*}
We now prove the following theorem, which proves Theorem \ref{t:Thm1} and guarantees the uniform convergence of the series \eqref{DNOanalytic} for suitably small $\varepsilon$.
\begin{thm} \label{t:ConvDNO2d}
Let $\xi \in H^{s+3/2}(S^1)$.  Then
\begin{equation}
\label{DNOrecursivebnd2d}
\|\hat{G}_{\rho, n}\xi \|_{H^{s+1/2}(S^1)} \leq K_1 \| \xi \|_{H^{s+3/2}(S^1)}B^n
\end{equation}
for $B> C |\rho|_{C^{s+2}}$. 
\end{thm}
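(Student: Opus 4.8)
The plan is to argue by induction on $n$, running in parallel with the induction already established in Theorem~\ref{e:Thm23} for the harmonic extension coefficients $u_n$. The key observation is that the recursive formula for $\hat{G}_{\rho,n}\xi$ expresses it as a finite sum of three kinds of terms: (i) a trace of a first-order derivative of $u_n$, $u_{n-1}$, or $u_{n-2}$, multiplied by $\rho$, $\rho'$, or a product thereof; and (ii) the lower-order DNO terms $\hat{G}_{\rho,n-1}\xi$ and $\hat{G}_{\rho,n-2}\xi$, again multiplied by $\rho$ or $\rho^2$. So I would set up a strong induction hypothesis: $\|\hat{G}_{\rho,m}\xi\|_{H^{s+1/2}(S^1)} \leq K_1 \|\xi\|_{H^{s+3/2}(S^1)} B^m$ for all $m < n$, and bound each term in the recursion.

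First I would record the trace estimate: for $w \in H^{s+2}(D)$, the boundary trace of $\partial_r w$ (and of $\partial_\theta w$) on $S^1$ lies in $H^{s+1/2}(S^1)$ with $\|\partial_r w(1,\cdot)\|_{H^{s+1/2}(S^1)} \leq C_{\mathrm{tr}} \|w\|_{H^{s+2}(D)}$, by the standard trace theorem applied to the $H^{s+1}(D)$ function $\partial_r w$. Combining this with Theorem~\ref{e:Thm23}, which gives $\|u_m\|_{H^{s+2}(D)} \leq K_0\|\xi\|_{H^{s+3/2}(S^1)} B^m$, the derivative-trace terms in the recursion are each bounded by a constant times $K_0 \|\xi\|_{H^{s+3/2}} B^{n-j}$ for $j \in \{0,1,2\}$. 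The multiplier estimate \eqref{e:Nicholls} handles the factors $\rho$, $\rho'$, $\rho\rho'$, $(\rho')^2+\rho^2$, contributing factors of $M(s)$ times the appropriate $C^{s+1+\delta}$-type norm of $\rho$ (dominated by $|\rho|_{C^{s+2}}$); note here the slight loss is exactly why the hypothesis $\rho \in C^{s+2}$ rather than $C^{s+1}$ is needed, matching the DNO mapping into $H^{s+1/2}$. The lower-order DNO terms are handled directly by the induction hypothesis together with \eqref{e:Nicholls}.

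Putting these together, I would get
\begin{align*}
\|\hat{G}_{\rho,n}\xi\|_{H^{s+1/2}(S^1)}
&\leq C_1 K_0 \|\xi\|_{H^{s+3/2}(S^1)} \bigl(B^n + |\rho|_{C^{s+2}}B^{n-1} + |\rho|_{C^{s+2}}^2 B^{n-2}\bigr) \\
&\quad + C_2 |\rho|_{C^{s+2}} K_1 \|\xi\|_{H^{s+3/2}(S^1)} \bigl(B^{n-1} + |\rho|_{C^{s+2}} B^{n-2}\bigr),
\end{align*}
for suitable constants $C_1, C_2$ depending only on $s$ (through $M(s)$, $C_{\mathrm{tr}}$, $K_0$). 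Since $|\rho|_{C^{s+2}} < B/C$ for the claimed constant $C$, each parenthetical factor is at most a constant multiple of $B^n$, so the whole right-hand side is $\leq (\text{const}) \cdot \|\xi\|_{H^{s+3/2}(S^1)} B^n$; choosing $B > C|\rho|_{C^{s+2}}$ with $C$ large enough relative to $C_1, C_2, K_0$ forces this constant to be $\leq K_1$, where I would take $K_1 = K_0\|\xi\|_{H^{s+3/2}(S^1)}$ to be consistent with the normalization in Theorem~\ref{t:Thm1} (absorbing the $\|\xi\|$ dependence), or more carefully track $K_1$ as an absolute constant times $K_0$. The base cases $n=0$ ($\hat{G}_{\rho,0}\xi = \partial_r u_0(1,\cdot)$, a pure trace) and $n=1$ are immediate from the trace estimate and Theorem~\ref{e:Thm23}.

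The main obstacle I anticipate is bookkeeping rather than conceptual: making the constants close up so that the same $K_1$ appears on both sides of the induction. This requires choosing $C$ (hence the radius $B$) large enough that the geometric factor $|\rho|_{C^{s+2}}/B$ beats all the accumulated constants $C_1, C_2, K_0, M(s), C_{\mathrm{tr}}$ — exactly the ``geometrically bound the Neumann expansion'' step referenced in the abstract. A minor secondary point is being careful that the $\delta$-loss in \eqref{e:Nicholls} is genuinely covered by $\rho \in C^{s+2}$ (i.e. $s + 1 + \delta \leq s+2$ for the factors like $\rho'$ that already cost one derivative), which is the reason for the $s+2$ regularity assumption, and that the normalizing multiplier $M_\rho(\varepsilon)$ — being a composition of the analytic function $t \mapsto (1+t)^{-1/2}$ with an analytic (indeed polynomial) function of $\varepsilon$ with $C^{s+2}$ coefficients — has a Taylor expansion in $\varepsilon$ whose coefficients are bounded multipliers on $H^{s+1/2}(S^1)$ with geometrically controlled norms, so that multiplying the series \eqref{DNOanalytic} by it preserves the conclusion; this last point can be relegated to a remark since $M_\rho(\varepsilon)$ was already noted to be ``clearly analytic.''
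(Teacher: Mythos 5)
Your proof follows essentially the same route as the paper's: induction on $n$ through the recursive formula for $\hat{G}_{\rho,n}\xi$, combining the trace theorem, the multiplier estimate \eqref{e:Nicholls}, the bound from Theorem~\ref{e:Thm23}, and the induction hypothesis, then closing the geometric estimate by taking $B > C|\rho|_{C^{s+2}}$. One small correction to the bookkeeping you flag yourself: here $K_1$ should be an absolute constant (the 3D version takes $K_1 = \max\{2C_1K_0, 2C_1K_0 M(s)\}$), not $K_0\|\xi\|_{H^{s+3/2}(S^1)}$ — the latter is the ``$K_1$'' used internally to invoke Lemma~\ref{lem22} inside the proof of Theorem~\ref{e:Thm23}, and reusing it here would double-count the factor $\|\xi\|_{H^{s+3/2}(S^1)}$ already appearing explicitly in \eqref{DNOrecursivebnd2d}.
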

\begin{proof}
We will proceed via induction. First, we show \eqref{DNOrecursivebnd2d} fo $n=0$:
\begin{align*}
\|\hat{G}_{\rho, 0}\xi \|_{H^{s+1/2}(S^1)} &  \leq \| \partial_r u_0\|_{H^{s+1/2}(S^1)} \leq C_1 \| \partial_r u_0\|_{H^{s+1}(D)} \\
& \leq C_1\|u_0\|_{H^{s+2}(D)} \leq C_1 K_0 \| \xi \|_{H^{s+3/2}(S^1)}. 
\end{align*}
In the second inequality of the first line, we have used the trace theorem, while Theorem \ref{e:Thm23} is used in the second line.
Now suppose that \eqref{DNOrecursivebnd2d} holds for $n<N$.  Then we have the following estimate:
\begin{align*}
\|\hat{G}_{\rho, N}\xi \|_{H^{s+1/2}(S^1)} 
& \leq \| \partial_r u_N\|_{H^{s+1/2}(S^1)} + 2\| \rho \partial_r u_{N-1}\|_{H^{s+1/2}(S^1)} + \|(\rho')^2 \partial_r u_{N-2}\|_{H^{s+1/2}(S^1)} \\
& \quad +\|\rho^2 \partial_r u_{N-2}\|_{H^{s+1/2}(S^1)} + \|\rho' \partial_\theta u_{N-1}\|_{H^{s+1/2}(S^1)} + \|\rho \rho' \partial_\theta u_{N-2}\|_{H^{s+1/2}(S^1)} \\
& \quad + 2\|\rho \hat{G}_{\rho, N-1} \xi\|_{H^{s+1/2}(S^1)} + \|\rho^2 \hat{G}_{\rho, N-2}\xi\|_{H^{s+1/2}(S^1)} \\
& \leq C_1 K_0\| \xi \|_{H^{s+3/2}(S^1)}B^N + 2 |\rho|_{C^{s+1/2 +\delta}}C_1 K_0\| \xi \|_{H^{s+3/2}(S^1)}B^{N-1} + \\
& \quad + |\rho|^2_{C^{s+3/2+\delta}}C_1 K_0\| \xi \|_{H^{s+3/2}(S^1)}B^{N-2} \\
& \quad +|\rho|^2_{C^{s+1/2+\delta}}C_1 K_0\| \xi \|_{H^{s+3/2}(S^1)}B^{N-2} +|\rho|_{C^{s+3/2+\delta}}C_1 K_0\| \xi \|_{H^{s+3/2}(S^1)}B^{N-1} \\
& \quad + |\rho|_{C^{s+1/2+\delta}}|\rho|_{C^{s+3/2+\delta}}C_1 K_0\| \xi \|_{H^{s+3/2}(S^1)}B^{N-2} \\
& \quad + 2|\rho|_{C^{s+1/2+\delta}} \|\hat{G}_{\rho, N-1} \xi\|_{H^{s+1/2}(S^1)} + |\rho|^2_{C^{s+1/2+\delta}} \| \hat{G}_{\rho, N-2}\xi\|_{H^{s+1/2}(S^1)} \\
& \leq K_1 \| \xi \|_{H^{s+3/2}(S^1)}B^N,
\end{align*}
for $B>C|\rho|_{C^{s+2}}$, where $C$ is independent of $u$, $N$, $\xi$, and $\rho$.  Here we have used the second inequality in \eqref{e:Nicholls}, as well as the trace theorem, Theorem \ref{e:Thm23}, and the inductive hypothesis on $\hat{G}_{\rho, N-2}$ and $\hat{G}_{\rho, N-2}$.
\end{proof}

\section{Three dimensional nearly-spherical domains} 
Here we consider \eqref{e:Steklov} in dimension $d=3$. We identify $\hat \theta \in \mathbb S^2$ with the inclination, $\theta\in[0,\pi]$, and azimuth, $\phi\in[0,2\pi]$. 
Let $\Omega_\varepsilon$ be a nearly-spherical domain where the perturbation function is expanded in the basis of real spherical harmonics, 
\begin{equation} \label{e:dom}
\Omega_\varepsilon=\{(r,\theta,\phi)\colon  0\le r\le 1 + \varepsilon \rho(\theta,\phi)\}, 
\qquad\text{where} \ \ \rho(\theta,\phi)=\sum_{\ell=0}^{\infty}\sum_{m = -\ell}^\ell A_{\ell,m} Y_{\ell,m}(\theta,\phi). 
\end{equation}
Here, $Y_{\ell,m}$ denote the real spherical harmonics, which are obtained from the complex spherical harmonics as follows. Define the \emph{complex spherical harmonic} by 
 \begin{equation} \label{e:ComplexHarmonic}
 Y_{\ell}^{m}(\theta,\phi)  =  \sqrt{\frac{(2\ell+1)}{4\pi}\frac{(\ell-m)!}{(\ell+m)!}}P_{\ell}^{m}(\cos(\theta))e^{im\phi}, 
 \qquad \qquad 
 \ell \geq 0, \ \  |m| \leq \ell,  
\end{equation}
where $P_{\ell}^{m}$ is the \emph{associated Legendre polynomial}, which can be defined through the Rodrigues formula, 
$P_{\ell}^{m}(x) = \frac{(-1)^{m}}{2^{\ell}\ell!}(1-x^{2})^{\frac{m}{2}}\frac{d^{m+\ell}}{dx^{m+\ell}}(x^{2}-1)^{\ell}$.
For $\ell \geq 0$ and $|m| \leq \ell$, the  \emph{real spherical harmonics} are then defined by
\begin{align}
\label{e:RealHarmonicsa}
Y_{\ell,m}(\theta,\phi) &=
\begin{cases}
\frac{i}{\sqrt{2}} \left[ Y_{\ell}^m(\theta,\phi) - (-1)^mY_{\ell}^{-m}(\theta,\phi) \right] & \text{if}\ m<0 \\
Y_{\ell}^0(\theta,\phi) & \text{if}\ m=0  \\
\frac{1}{\sqrt{2}} \left[ Y_{\ell}^{-m}(\theta,\phi) + (-1)^mY_{\ell}^{m}(\theta,\phi) \right] & \text{if}\ m>0 
\end{cases}. 
\end{align}

\subsection{Analyticity of the harmonic extension for nearly-spherical domains} 
As in Section~\ref{s:AnalHarmExt2d}, we first consider the problem  of harmonically extending a function $\xi(\theta, \phi)$ from $\partial \Omega_\varepsilon$ to $\Omega_\varepsilon$, 
\begin{subequations} \label{e:Laplace3d}
\begin{align}
\label{e:Laplace3da}
& \Delta v =  r^{-2} \partial_r \left( r^2 \partial_r v \right)  + r^{-2} \sin^{-1}(\theta)  \partial_\theta\left( \sin(\theta)\partial_\theta v \right) + r^{-2} \sin^{-2} (\theta) \partial_\phi^2 v  = 0  \\
& v(1+\varepsilon \rho(\theta, \phi), \theta, \phi) = \xi(\theta, \phi). 
\end{align}
\end{subequations}
Mapping $\Omega_\varepsilon$ to the unit ball, $B = \Omega_0$, we make the change of variables 
\begin{align} \label{e:ChangeVariables3d}
(r',\theta', \phi') = \left( (1+\varepsilon \rho(\theta, \phi))^{-1}  r, \theta, \phi \right). 
\end{align}
The partial derivatives in the new coordinates are given by
\begin{align*}
\frac{\partial}{\partial r} &= \frac{1}{1+\varepsilon \rho(\theta', \phi')} \frac{\partial}{ \partial r'}  \\
\frac{\partial}{\partial \theta} &= \frac{\partial }{ \partial \theta' } - \frac{\varepsilon r' \rho_\theta(\theta', \phi')}{1+ \varepsilon \rho(\theta',\phi')} \frac{\partial }{ \partial r'} \\
\frac{\partial}{\partial \phi} &= \frac{\partial }{ \partial \phi' } - \frac{\varepsilon r' \rho_\phi(\theta', \phi')}{1+ \varepsilon \rho(\theta',\phi')} \frac{\partial }{ \partial r'}.
\end{align*}
Applying this change of coordinates to the Laplace equation \eqref{e:Laplace3da}, setting 
\[u_\varepsilon(r',\theta', \phi') = v( (1+ \varepsilon \rho(\theta') ) r', \theta', \phi' ),\] 
multiplying by $(1+ \varepsilon \rho )^4$,
and dropping the primes on the transformed variables yields  
\begin{align*}
(1+ \varepsilon \rho )^4 \Delta v &= 
(1+ \varepsilon \rho )^2 \Delta u_{\varepsilon} - \varepsilon (1+ \varepsilon \rho) \Big[ \sin^{-1}(\theta) \partial_\theta \Big( \sin(\theta) \partial_\theta \rho \Big) + \sin^{-2}(\theta) \partial_\phi^2 \rho   \Big] (r^{-1} \partial_r u_{\varepsilon}) \\
& \quad - 2 \varepsilon (1+ \varepsilon \rho ) \Big[ \rho_\theta r^{-1} \partial_\theta \partial_r u_{\varepsilon} + \Big(\sin^{-1}(\theta)\partial_\phi \rho \Big) \Big( r^{-1} \sin^{-1}(\theta) \partial_\phi \Big) \partial_r u_{\varepsilon}  \Big]  \\
& \quad + 2 \varepsilon^2 \Big[ (\rho_\theta^2 +  \sin^{-2}(\theta) \rho_\phi^2 \Big] (r^{-1} \partial_r u_{\varepsilon})
+ \varepsilon^2 \Big( \sin^{-2}(\theta) \rho_\phi^2 \Big)  \partial_r^2 u_{\varepsilon} \\
& = 0.
\end{align*}

Defining the operators: 
\begin{align*}
\Delta_S u &= \sin^{-1}(\theta) \partial_\theta \left( \sin(\theta) \partial_\theta u \right) + \sin^{-2}(\theta) \partial_\phi^2 u \\
L_1u & = 2\rho \Delta u + (\Delta_S\rho)r^{-1}\partial_r u -2\left(r^{-1}\rho_\theta \partial_\theta \partial_r u + \frac{\rho_\phi}{\sin\theta}\frac{\partial_\phi \partial_r u}{r\sin\theta}\right) \\
L_2u & = \rho^2 \Delta u + (\rho\Delta_S\rho)r^{-1}\partial_r u -2\rho \left(r^{-1}\rho_\theta \partial_\theta \partial_r u + \frac{\rho_\phi}{\sin\theta}\frac{\partial_\phi \partial_r u}{r\sin\theta}\right) \\
\nonumber 
& \quad + 2\left( (\rho^2_\theta + \frac{\rho^2_\phi}{\sin^2\theta} )(r^{-1}\partial_r u) +\frac{\rho^2_\phi}{\sin^2\theta} \partial^2_r u \right)
\end{align*}
The function $u_\varepsilon$ satisfies
\begin{subequations}
\label{Ueqn}
\begin{align}
\label{Ueqna}
& \Delta u_\varepsilon  = \varepsilon L_1 u_\varepsilon + \varepsilon^2 L_2 u_\varepsilon && \textrm{in } B\\
\label{Ueqnb}
&  u_\varepsilon (1,\theta,\phi) = \xi(\theta,\phi) && \textrm{on } S^2.  
\end{align}
\end{subequations}

\begin{lem}
\label{lem31}
For $s\in \mathbb N$, there is a constant $K_0 > 0$ such that for any $F \in H^{s-1}(B)$ and $\xi \in H^{s+ \frac 1 2}(S^2)$, the solution of 
\begin{align*}
& \Delta w(r,\theta,\phi) = F(r,\theta,\phi) && (r,\theta,\phi) \in B \\
& w(1,\theta,\phi) = \xi(\theta,\phi) 
\end{align*}
satisfies
\[
\| w \|_{H^{s+1}(B)} \leq K_0 \left( \| F \|_{H^{s-1}(B)}  + \| \xi \|_{H^{s+ \frac 1 2}(S^2)} \right). 
\]
\end{lem}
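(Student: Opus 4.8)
The plan is to follow the exact same strategy used in Lemma~\ref{lem21}, now in three dimensions, splitting the solution $w$ into a harmonic part $\Phi$ carrying the boundary data and a part $v$ with zero boundary data carrying the inhomogeneity $F$. I would present the case $s=0$ in detail and remark that $s\geq 1$ follows by the same argument applied to higher-order derivatives (or by elliptic regularity). First, expand the boundary datum $\xi$ in real spherical harmonics, $\xi(\theta,\phi)=\sum_{\ell,m} \hat\xi_{\ell,m} Y_{\ell,m}(\theta,\phi)$, and define the harmonic extension
\[
\Phi(r,\theta,\phi)=\sum_{\ell=0}^\infty \sum_{m=-\ell}^\ell \hat\xi_{\ell,m}\, r^\ell\, Y_{\ell,m}(\theta,\phi),
\]
which solves $\Delta\Phi=0$ in $B$ with $\Phi(1,\cdot)=\xi$, since $r^\ell Y_{\ell,m}$ are the solid harmonics. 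Then set $v=w-\Phi$, so that $\Delta v=F$ in $B$ and $v(1,\cdot)=0$.

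The second step is the bound $\|\Phi\|_{H^1(B)}\leq C_{1/2}\|\xi\|_{H^{1/2}(S^2)}$. Using orthonormality of the $Y_{\ell,m}$ on $S^2$ and writing the $H^1(B)$ norm in spherical coordinates, $\|\Phi\|_{H^1(B)}^2=\sum_{\ell,m}|\hat\xi_{\ell,m}|^2\int_0^1\big[(1+\ell(\ell+1)r^{-2})r^{2\ell}+\ell^2 r^{2\ell-2}\big]\,r^2\,dr$, one computes each radial integral explicitly; it is bounded by a constant times $\langle\ell\rangle$ (the factor $r^{2\ell+2}/(2\ell+3)$ etc.\ contributes $O(1)$, and the $\ell(\ell+1)r^{2\ell}$ term contributes $O(\ell)$), whence the sum is controlled by $\sum_{\ell,m}\langle\ell\rangle|\hat\xi_{\ell,m}|^2=\|\xi\|_{H^{1/2}(S^2)}^2$. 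The only care needed is to note that the Laplace--Beltrami eigenvalue $\ell(\ell+1)$ scales like $\langle\ell\rangle^2$, matching the definition of the fractional Sobolev norm on $S^2$.

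The third step handles $v$: multiply $\Delta v=F$ by $\bar v$, integrate by parts over $B$, and use $v|_{\partial B}=0$ to get $\|\nabla v\|_{L^2(B)}^2=-\int_B F\bar v$. By the duality pairing of $H_0^1(B)$ and $H^{-1}(B)$ this is bounded by $\|F\|_{H^{-1}(B)}\|v\|_{H_0^1(B)}$, and the Poincar\'e inequality on $B$ gives $\|v\|_{H^1(B)}\leq C_B\|F\|_{H^{-1}(B)}$. Combining with the triangle inequality $\|w\|_{H^1(B)}\leq\|v\|_{H^1(B)}+\|\Phi\|_{H^1(B)}$ and taking $K_0=\max\{C_{1/2},C_B\}$ yields the claim for $s=0$. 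For $s\geq 1$ the same decomposition works: the radial integrals in the $\Phi$-bound are of the same form with higher powers of $\ell$ and $r$, still giving the gain of one half-derivative from $\partial\Omega$ to $\Omega$, and the estimate on $v$ follows from interior and boundary elliptic regularity for the Dirichlet problem on the ball. I expect the only mildly technical point to be bookkeeping the radial integrals uniformly in $\ell$ in the $\Phi$-estimate; everything else is a direct transcription of the two-dimensional argument.
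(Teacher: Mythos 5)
Your proposal follows the paper's proof essentially verbatim: the same decomposition $w=v+\Phi$ with $\Phi$ the solid-harmonic extension of $\xi$, the same explicit radial integrals (giving the $O(\langle\ell\rangle)$ scaling that matches $H^{1/2}(S^2)$), the same $H_0^1$/$H^{-1}$ duality plus Poincar\'e bound on $v$, and the same remark deferring $s\geq 1$ to a similar argument. The only cosmetic difference is that you expand in real spherical harmonics while the paper's proof uses complex ones; this is immaterial.
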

\begin{proof}
We will prove the result for $s=0$. Since $\xi \in H^{\frac 1 2 }(S^2)$, we have the spherical harmonic transform, 
\[
\xi(\theta,\phi) =  \sum_{\ell=0}^\infty \sum_{m=-\ell}^{\ell}\hat{\xi}(\ell,m) Y_\ell^m(\theta,\phi), 
\qquad \textrm{where} \ \ 
\hat{\xi}(\ell, m) = \int_0^{2 \pi} \int_0^\pi \xi(\theta,\phi) \overline{Y_\ell^m(\theta,\phi)}\sin\theta d \theta d \phi.
\]
 and $Y_\ell^m$ are the (complex) spherical harmonics. Set $v = w- \Phi$, where $\Phi$ solves
\begin{align*}
\Delta \Phi & = 0 && \textrm{in } B\\
\Phi & = \xi && \textrm{on } S^2.
\end{align*}
Then v satisfies
\begin{subequations}
\label{Lem21veqn}
\begin{align}
\Delta v& = F && \textrm{in } B\\
v& = 0 && \textrm{on } S^2.
\end{align}
\end{subequations}
We have
$$\Phi(r, \theta, \phi) = \sum_{\ell=0}^\infty \sum_{m=-\ell}^{\ell}r^\ell \hat{\xi}(\ell,m) Y_\ell^m(\theta,\phi),$$
and defining
$$\hat{\Phi}(r,\ell,m) = r^\ell \int_0^{2 \pi} \int_0^\pi \Phi(1, \theta,\phi) \overline{Y_\ell^m(\theta,\phi)} \ \sin\theta d \theta d \phi$$
we see that 
$$
\hat{\Phi}(r,\ell,m) = r^\ell \hat{\xi}(\ell,m).
$$
We thus calculate:
\begin{subequations} \label{e:bndPhi3d}
\begin{align}
\| \Phi \|^2_{H^1(B)} 
&= \sum_{\ell, m} \Big[ \iiint_B  \left( r^{2\ell} |\hat{\xi}(\ell,m)|^2 |Y_\ell^m(\theta,\phi)|^2 +  |\nabla r^\ell \hat{\xi}(\ell,m)Y_\ell^m(\theta,\phi)|^2 \right) \ r^2 \sin\theta d\theta d\phi dr \\
&= \sum_{\ell, m} \iiint_B |\hat{\xi}(\ell,m)|^2 |Y_\ell^m(\theta,\phi)|^2 \left( r^{2\ell} (1+\ell(\ell+1)) +\ell^2r^{2\ell-2} \right) \ r^2\sin\theta d\theta d\phi dr \\
&=\sum_{\ell, m}|\hat{\xi}(\ell,m)|^2 \left (\frac{\ell^2 + \ell + 1}{2\ell +3} + \frac{\ell^2}{2\ell+1}\right) \\
& \leq C_{\frac 1 2} \| \xi \|^2_{H^{1/2}(S^2)}
\end{align}
\end{subequations}
for some constant $C_{\frac 1 2} > 0$. 

Multiplying \eqref{Lem21veqn} by $\overline v$ and integrating by parts yields
\[ 
\| v \|_{H_0^1(B)}^2 = \int_B \nabla v \cdot \nabla \overline{v} = - \int_B F \overline{v} .
\]
By the duality of $H_0^1(B)$ and $H^{-1}(B)$, we have
$\int_B | F \overline{v}|  \leq \| F \|_{H^{-1}(B)} \| v \|_{H_0^1(B)}$. 
Since $v \in H_0^1(B)$, by the Poincar\'e inequality, there exists a constant $C_B$ such that $ \| v \|_{H^1(B)} \leq  C_B \| v \|_{H^1_0(B)}$ and we conclude that 
\begin{equation} \label{e:bndv3d}
\| v \|_{H^1(B)} \leq  C_B \| v \|_{H_0^1(B)} \leq C_B \| F \|_{H^{-1}(B)}.
\end{equation}

Using the decomposition $w = v + \Phi$ and using \eqref{e:bndPhi3d} and \eqref{e:bndv3d}, we obtain
\[ 
\| w \|_{H^1(B)} \leq \| v \|_{H^1(B)} + \| \Phi \|_{H^1(B)} 
\leq C_B \| F \|_{H^{-1}(B)} + C_{\frac 1 2} \| \xi \|_{H^{\frac 1 2}(S^2)}.
\]
Taking $K_0 = \max\{ C_{\frac 1 2}, C_B \}$ yields the desired result for $s=0$. The proof for $s\geq 1$ is similar. 
\end{proof} 

Let us make the ansatz
\begin{align}
\label{upwrseries}
u_\varepsilon(r,\theta,\phi) = \sum_{n=0}^\infty \varepsilon^n u_n(r,\theta,\phi).
\end{align}
Then, by \eqref{Ueqn}, we have the recursive formula
\begin{subequations}
\begin{align}
\Delta u_n & = L_1 u_{n-1} +  L_2 u_{n-2} && \textrm{in } B \\
u_n & = 
	\begin{cases}
	\xi & \text{if } n=0 \\
	0 & \text{if } n > 0
	\end{cases} && \textrm{on } S^2. 
\end{align}
\end{subequations}

\begin{lem}
\label{lem32}
Let $s\in \mathbb N$ and let $\rho \in C^{s+2}(S^2)$.  Assume that $K_1$ and $A$ are constants so that 
$$
\| u_n \|_{H^{s+2}(B)} \leq K_1 A^n
\qquad \qquad \textrm{for all } \ \  n < N.
$$
If $A >|\rho|_{C^{s+2}}$, 
then there exists a $C_0$ such that 
\begin{align*}
\| L_1 u_{N-1} \|_{H^s(B)} &\leq K_1 |\rho|_{C^{s+2}} C_0 A^{N-1} \\
\| L_2 u_{N-2} \|_{H^s(B)} &\leq K_1 |\rho|_{C^{s+2}} C_0 A^{N-1}.
\end{align*}
\end{lem}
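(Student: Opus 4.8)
\section*{Proof proposal for Lemma~\ref{lem32}}

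The plan is to follow the proof of Lemma~\ref{lem22} essentially line for line, the only genuinely new feature being the apparent coordinate singularities at the poles $\theta\in\{0,\pi\}$ (and, as in two dimensions, at the origin $r=0$) produced by spherical coordinates. As a first step I would rewrite $L_1u_{N-1}$ and $L_2u_{N-2}$ so that every differential operator acting on $u$ is at most second order and every coefficient is obtained from $\rho$ by a bounded operation. Since $2\rho\,\Delta u$ and $\rho^2\,\Delta u$ already occur explicitly in $L_1$ and $L_2$, the regrouping is immediate: writing $\nabla_S$ and $\Delta_S$ for the surface gradient and Laplace--Beltrami operator on $S^2$, and using $\partial_\theta f\,\partial_\theta g+\sin^{-2}\!\theta\,\partial_\phi f\,\partial_\phi g=\nabla_S f\cdot\nabla_S g$,
\begin{align*}
L_1u_{N-1} &= 2\rho\,\Delta u_{N-1} + (\Delta_S\rho)\,r^{-1}\partial_r u_{N-1} - 2\,r^{-1}\,\nabla_S\rho\cdot\nabla_S(\partial_r u_{N-1}), \\
L_2u_{N-2} &= \rho^2\,\Delta u_{N-2} + (\rho\,\Delta_S\rho)\,r^{-1}\partial_r u_{N-2} - 2\rho\,r^{-1}\,\nabla_S\rho\cdot\nabla_S(\partial_r u_{N-2}) \\
&\qquad + 2\,|\nabla_S\rho|^2\,r^{-1}\partial_r u_{N-2} + 2\,\sin^{-2}\!\theta\,\rho_\phi^2\,\partial_r^2 u_{N-2}.
\end{align*}
The point of writing the mixed terms as $\nabla_S\rho\cdot\nabla_S(\partial_r u)$ and the quadratic gradient term as $|\nabla_S\rho|^2$ is that these are smooth tensorial contractions: $\Delta_S\rho\in C^s(S^2)$ and $|\nabla_S\rho|^2\in C^s(S^2)$ with norms $\lesssim|\rho|_{C^{s+2}}$, and the contraction $\nabla_S\rho\cdot\nabla_S(\partial_r u)$ can be estimated in $H^s(B)$ by $C\,|\rho|_{C^{s+1}}\,\|\partial_r u\|_{H^{s+1}(B)}$ via \eqref{e:Nicholls} applied in local orthonormal frames, even though the individual coordinate pieces $\rho_\theta$ and $\sin^{-1}\!\theta\,\rho_\phi$ need not be continuous at the poles.

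With this rewriting in hand the estimate proceeds exactly as in Lemma~\ref{lem22}: I would apply the triangle inequality, then \eqref{e:Nicholls} to each summand to pull the $C^s$ norm of the $\rho$-coefficient out front, and then use that each of $\Delta$, $r^{-1}\partial_r$, $r^{-1}\nabla_S\partial_r$, $\partial_r^2$ is a second-order operator on $B$, so that it maps $H^{s+2}(B)\to H^s(B)$ boundedly --- precisely the step flagged as ``all operators acting on $u_{N-1}$ are second order'' in the two-dimensional proof. Summing the resulting bounds gives $\|L_1u_{N-1}\|_{H^s(B)}\le C_0\,|\rho|_{C^{s+2}}\,\|u_{N-1}\|_{H^{s+2}(B)}$ and $\|L_2u_{N-2}\|_{H^s(B)}\le C_0\,|\rho|_{C^{s+2}}^2\,\|u_{N-2}\|_{H^{s+2}(B)}$ for a suitable constant $C_0=C_0(s)$. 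Inserting the inductive hypothesis $\|u_n\|_{H^{s+2}(B)}\le K_1A^n$ for $n<N$ and using $A>|\rho|_{C^{s+2}}$ --- so that $|\rho|_{C^{s+2}}^2\,A^{N-2}\le|\rho|_{C^{s+2}}\,A^{N-1}$ --- yields both claimed inequalities, after enlarging $C_0$ if necessary.

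The step I expect to require the most care is justifying that the operators above genuinely act boundedly between the stated spaces despite their singular coordinate expressions, both at the poles and at $r=0$: concretely, that $\sin^{-2}\!\theta\,\rho_\phi^2$, which is merely bounded and not $C^s$, can be absorbed the way $(\rho')^2$ is in two dimensions, and that $r^{-1}\partial_r u$ and $r^{-1}\nabla_S\partial_r u$ lie in $H^s(B)$ with norm controlled by $\|u\|_{H^{s+2}(B)}$. The resolution is the one implicit in \cite{Nicholls_2004} and already used for Lemma~\ref{lem22}: these are bounded second-order operators in the spherical Sobolev norms, which one checks on the spherical-harmonic side exactly as in the computation \eqref{e:bndPhi3d}, and the pole singularity in $\sin^{-2}\!\theta\,\rho_\phi^2\,\partial_r^2 u$ is cancelled by the double zero of $\rho_\phi^2$ at the poles (equivalently, one may absorb it together with $|\nabla_S\rho|^2\,r^{-1}\partial_r u$ into operators with smooth coefficients). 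Once this is granted the remaining bookkeeping is identical to the two-dimensional case, and Lemma~\ref{lem32} follows.
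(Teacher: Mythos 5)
Your proposal is correct and follows essentially the same route as the paper's own proof of Lemma~\ref{lem32}: triangle inequality, the Nicholls-type coefficient estimates \eqref{e:Nicholls}, the observation that every operator acting on $u$ is second order and hence bounded $H^{s+2}(B)\to H^s(B)$, and then the inductive hypothesis together with $A>|\rho|_{C^{s+2}}$. The only difference is that you regroup the mixed and quadratic terms as $\nabla_S\rho\cdot\nabla_S(\partial_r u)$ and $|\nabla_S\rho|^2$ rather than leaving them in the coordinate form $\rho_\theta\partial_\theta\partial_r u + \sin^{-2}\theta\,\rho_\phi\partial_\phi\partial_r u$ that the paper uses; this is a cleaner, frame-invariant way to see that the coefficients lie in $C^s(S^2)$ with norm $\lesssim|\rho|_{C^{s+2}}$ despite the apparent $\sin^{-1}\theta$ singularities at the poles, a regularity point the paper invokes implicitly but does not flag.
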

\begin{proof}
Using the triangle inequality and \eqref{e:Nicholls}, we calculate:
\begin{align*}
\| L_1u_{N-1} \|_{H^s(B)} & \leq  \| 2\rho \Delta u_{N-1}\|_{H^s(B)} + \| (\Delta_{S^2}\rho)r^{-1}\partial_r u_{N-1}\|_{H^s(B)} + 2 \| r^{-1}\rho_\theta \partial_\theta \partial_r u_{N-1}\|_{H^s(B)}\\
& \quad + 2 \left\| \frac{\rho_\phi}{\sin\theta}\cdot \frac{\partial_\phi \partial_r u_{N-1}}{r\sin\theta} \right\|_{H^s(B)} \\
& \leq M(s) \Big( 2|\rho|_{C^s}\|u_{N-1}\|_{H^{s+2}(B)} + |\rho|_{C^{s+2}}\|u_{N-1}\|_{H^{s+2}(B)} + 4|\rho|_{C^{s+1}}\|u_{N-1}\|_{H^{s+2}(B)} \Big) \\
& \leq 7M(s) |\rho|_{C^{s+2}}K_1A^{N-1}.
\end{align*}
In the second inequality, we have also used that all operators acting on $u_{N-1}$ are second order. 
We similarly estimate $\|L_2 u_{N-2}\|_{H^s(B)}$:
\begin{align*}
\| L_2u_{N-2} \|_{H^s(B)} & \leq  \| \rho^2 \Delta u_{N-2} \|_{H^s(B)} + \| (\rho\Delta_{S^2}\rho)r^{-1}\partial_r u_{N-2} \|_{H^s(B)} + 2\| \rho\rho_\theta  r^{-1} \partial_\theta \partial_r u_{N-2} \|_{H^s(B)}\\
&  \quad + \left\|\frac{\rho \rho_\phi}{\sin\theta}\cdot \frac{\partial_\phi \partial_r u_{N-2}}{r\sin\theta} \right\|_{H^s(B)} 
+ 2\left\| \rho^2_\theta r^{-1}\partial_r u_{N-2} \right\|_{H^s(B)} \\
& \quad + 2 \left\|\frac{\rho^2_\phi}{\sin^2\theta}r^{-1}\partial_r u_{N-2} \right\|_{H^s(B)}
+ 2 \left\| \frac{\rho^2_\phi}{\sin^2\theta} \partial^2_r u_{N-2} \right\|_{H^s(B)} \\
& \leq M(s) \Big( |\rho|^2_{C^2}\|u_{N-2} \|_{H^{s+2}(B)} + |\rho|_{C^s} |\rho|_{C^{s+2}}\|u_{N-2} \|_{H^{s+2}(B)}\\
& \quad + 4  |\rho|_{C^s} |\rho|_{C^{s+1}}\|u_{N-2} \|_{H^{s+2}(B)}  + 6|\rho|_{C^{s+1}}\|u_{N-2} \|_{H^{s+2}(B)} \Big) \\
& \leq 12M(s) |\rho|^2_{C^{s+2}} K_1 A^{N-2} \\
& \leq 12M(s) |\rho|_{C^{s+2}} K_1 A^{N-1}.
\end{align*}
Taking $C_0 = 12 M(s)$ completes the proof.
\end{proof}

The following theorem justifies the convergence of \eqref{upwrseries} for suitably small $\varepsilon > 0$.

\begin{thm} \label{e:Thm33}
Given $s \in \mathbb N$, if $\rho \in C^{s+2}(S^2)$ and $\xi \in H^{s+\frac 3 2}(S^2)$, there exists constants $C_0$ and $K_0$ and a unique solution $u_\varepsilon$ of \eqref{Ueqn} satisfying \eqref{upwrseries} such that
\begin{equation}
\label{u-bound3d}
\|u_n \|_{H^{s+2}(B)} \leq K_0 \| \xi \|_{H^{s+ \frac 3 2}(S^2)} A^n
\end{equation}
for any $A > 2 K_0 C_0 | \rho|_{C^{s+2}}$. 
\end{thm}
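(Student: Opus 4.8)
The plan is to mirror exactly the two-dimensional argument in Theorem~\ref{e:Thm23}, now using Lemma~\ref{lem31} (the elliptic estimate on the ball $B$) and Lemma~\ref{lem32} (the bounds on $L_1$ and $L_2$) in place of their planar counterparts. First I would establish existence and uniqueness of a solution to \eqref{Ueqn} of the form \eqref{upwrseries} by solving the recursion term by term: $u_0$ solves $\Delta u_0 = 0$ in $B$ with $u_0 = \xi$ on $S^2$, and for $n\geq 1$, $u_n$ solves $\Delta u_n = L_1 u_{n-1} + L_2 u_{n-2}$ in $B$ with $u_n = 0$ on $S^2$ (with the convention $u_{-1} \equiv 0$). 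Since $\xi \in H^{s+\frac 3 2}(S^2)$ and, inductively, each right-hand side lies in $H^s(B) \subset H^{s-1}(B)$, Lemma~\ref{lem31} applied with regularity index $s+1$ guarantees each $u_n$ is well-defined in $H^{s+2}(B)$; summing the resulting geometric series then gives a solution of \eqref{Ueqn} for $\varepsilon$ small, and uniqueness follows because any two solutions of the form \eqref{upwrseries} must satisfy the same recursion.

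The quantitative bound \eqref{u-bound3d} is then proved by induction on $n$. For the base case $n=0$, Lemma~\ref{lem31} (with index $s+1$, $F=0$) gives $\|u_0\|_{H^{s+2}(B)} \leq K_0 \|\xi\|_{H^{s+\frac 3 2}(S^2)} = K_0 \|\xi\|_{H^{s+\frac 3 2}(S^2)} A^0$. For the inductive step, set $K_1 = K_0 \|\xi\|_{H^{s+\frac 3 2}(S^2)}$ and suppose \eqref{u-bound3d} holds for all $n < N$. Applying Lemma~\ref{lem31} to the equation for $u_N$,
\begin{align*}
\|u_N\|_{H^{s+2}(B)} &\leq K_0\left( \|L_1 u_{N-1}\|_{H^s(B)} + \|L_2 u_{N-2}\|_{H^s(B)} \right) \\
&\leq K_0 \left( K_1 |\rho|_{C^{s+2}} C_0 A^{N-1} + K_1 |\rho|_{C^{s+2}} C_0 A^{N-1} \right) \\
&= 2 K_0 K_1 C_0 |\rho|_{C^{s+2}} A^{N-1} = 2 K_0^2 \|\xi\|_{H^{s+\frac 3 2}(S^2)} C_0 |\rho|_{C^{s+2}} A^{N-1},
\end{align*}
which is at most $K_0 \|\xi\|_{H^{s+\frac 3 2}(S^2)} A^N$ precisely when $A > 2 K_0 C_0 |\rho|_{C^{s+2}}$. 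Here the hypothesis $A > |\rho|_{C^{s+2}}$ needed to invoke Lemma~\ref{lem32} is automatic once $2K_0 C_0 \geq 1$ (and can be arranged by enlarging $C_0$ if necessary), so the chain of implications is consistent.

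There is essentially no new obstacle in three dimensions beyond what was handled in two: the only place dimension enters is in the elliptic estimate and the product estimates, both of which have already been proved as Lemmas~\ref{lem31} and~\ref{lem32}. The one point that deserves care is checking that Lemma~\ref{lem31}, stated with a generic regularity index, is being applied at the correct index ($s+1$, so that the source term is measured in $H^{s}$ rather than $H^{s-1}$, matching the output of Lemma~\ref{lem32}) and that the convention $u_{-1}=0$ makes the $n=1$ case of the recursion degenerate correctly so that the induction genuinely only uses the two immediately preceding terms. I would also remark explicitly that the constants $C_0$, $K_0$ depend only on $s$ and the dimension, not on $\xi$, $\rho$, or $n$, which is what allows the geometric bound to close uniformly.
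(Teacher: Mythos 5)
Your proposal is correct and mirrors the paper's own proof of Theorem~\ref{e:Thm33} essentially line for line: base case from Lemma~\ref{lem31} with $F=0$, define $K_1 = K_0\|\xi\|_{H^{s+\frac 3 2}(S^2)}$, and close the induction by combining Lemma~\ref{lem31} with Lemma~\ref{lem32} to get $\|u_N\|_{H^{s+2}} \leq 2K_0^2 C_0 \|\xi\|_{H^{s+\frac 3 2}}|\rho|_{C^{s+2}} A^{N-1} \leq K_0\|\xi\|_{H^{s+\frac 3 2}} A^N$ when $A > 2K_0 C_0 |\rho|_{C^{s+2}}$. Your added remarks on applying Lemma~\ref{lem31} at regularity index $s+1$, the $u_{-1}\equiv 0$ convention, and the consistency of the hypothesis $A > |\rho|_{C^{s+2}}$ from Lemma~\ref{lem32} are small clarifications the paper leaves implicit but do not change the argument.
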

\begin{proof}
We proceed by induction.  For $n=0$, we use Lemma \ref{lem31} to we see
\begin{equation*}
     \| u_0 \|_{H^{s+2}} \leq K_0 \| \xi \|_{H^{s+ \frac 3 2}} A^0,
\end{equation*}
as desired to show \eqref{u-bound3d}. We now define $K_1=K_0\| \xi \|_{H^{s+ \frac{3}{2}}(S^2)}$ for the remainder of the proof to be used in Lemma \ref{lem32}. 

Suppose inequality \eqref{u-bound3d} holds for $n<N$.  Then by Lemma \ref{lem31},
\begin{equation*}
    \| u_N \|_{H^{s+2}} \leq K_0 \left( \| L_1 u_{N-1}\|_{H^s} + \| L_2 u_{N-2}\|_{H^s} \right). 
\end{equation*}
By Lemma \ref{lem32}, we may bound $\|L_1 u_{N-1}\|_{H^s}$ and $\|L_2 u_{N-2}\|_{H^s}$ so that
\begin{align*}
    \| u_N \|_{H^{s+2}} & \leq 2K_0 K_1 C_0 |\rho|_{C^{s+2}} A^{N-1} \\
    & = 2 K_0^2 \| \xi \|_{H^{s+ \frac 3 2}(S^2)}C_0 |\rho|_{C^{s+2}} A^{N-1} \\
    & \leq K_0 \| \xi \|_{H^{s+ \frac 3 2}(S^2)} A^N
\end{align*}
provided $A> 2K_0 C_0 |\rho|_{C^{s+2}}$.
\end{proof}

\subsection{Proof of Theorem~\ref{t:Thm1} in three dimensions: Analyticity of the Dirichlet to Neumann operator} 
\label{s:DNO-anal-3D}

Denote the Dirichlet-to-Neumann operator $G_{\rho, \varepsilon}\colon H^{s+1/2}(\partial \Omega_\varepsilon) \rightarrow H^{s-1/2}(\partial \Omega_\varepsilon)$ which is defined
\begin{align*}
G_{\rho, \varepsilon}\xi = \vec{n}_\varepsilon \cdot \nabla v
\end{align*}
where $v$ satisfies \eqref{e:Laplace3d} and 
\begin{align*}
\vec{n}_\varepsilon & = 
\left( (1+2\varepsilon \rho)^2 + \varepsilon^2 \rho_\theta^2 + \varepsilon^2 \frac{\rho^2_\phi}{\sin^2\theta}\hat{\phi}\right)^{-1/2}
\left( (1+\varepsilon \rho) \hat{r} - \varepsilon \rho_\theta \hat{\theta} - \varepsilon \frac{\rho_\phi}{\sin\theta}\hat{\phi}\right)  \\
& = M_\rho(\varepsilon) \left( (1+\varepsilon \rho) \hat{r} - \varepsilon \rho_\theta \hat{\theta} - \varepsilon \frac{\rho_\phi}{\sin\theta}\hat{\phi}\right )
\end{align*}
is the unit-length normal vector on $\partial \Omega_\varepsilon$.  Here the spherical coordinate vectors $\hat{r}, \hat{\theta}, \hat{\phi}$ are given by
$$
\hat r = \begin{pmatrix}
\sin(\theta) \cos (\phi) \\ \sin(\theta) \sin(\phi) \\ \cos(\theta)
\end{pmatrix}, 
\qquad
\hat \theta = \begin{pmatrix}
\cos(\theta) \cos (\phi) \\ \cos(\theta) \sin(\phi) \\ -\sin(\theta)
\end{pmatrix}, 
\quad \textrm{and} \quad
\hat \phi = \begin{pmatrix}
- \sin(\phi) \\ \cos(\phi) \\ 0
\end{pmatrix}. 
$$
Making the change of variables in \eqref{e:ChangeVariables3d}, we obtain 
\begin{subequations}
\begin{align}
\label{DnoRep}
G(1 + \varepsilon \rho(\theta,\phi) ) \xi & = M_\rho(\varepsilon)\left[ (1+ \frac{\varepsilon^2}{(1+\varepsilon\rho)^2}\left (\rho^2_\theta + \frac{\rho^2_\phi}{\sin^2\theta}\right )\partial_r u - \frac{\varepsilon\rho_\theta}{1+\varepsilon \rho} \partial_\theta u - \frac{\varepsilon\rho_\phi}{(1+\varepsilon\rho)\sin^2\theta}\partial_\phi u \right] \\
& = M_\rho(\varepsilon) \hat{G}_{\rho, \varepsilon} \xi
\end{align}
\end{subequations}
Since $M_\rho(\varepsilon)$ is clearly analytic near $\varepsilon = 0$, we need only show the analyticity of $\hat{G}_{\rho, \varepsilon}$ near $\varepsilon = 0$. to verify that $G_{\rho, \varepsilon}$ is analytic as well.  Note that $\hat{G}_{\rho, \varepsilon}$ satisfies
\begin{align}
\label{DnoHatRep}
\hat{G}_{\rho, \varepsilon}\xi & =\partial_r u + \varepsilon \left( 2\rho \partial_r u  - \rho_\theta \partial_\theta u - \frac{\rho_\phi}{\sin^2\theta}\partial_\phi u -2\rho \hat{G}_{\rho, \varepsilon}\xi\right) \\
\nonumber
& \quad + \varepsilon^2 \left[ \left(\rho^2 + \rho^2_\theta+ \frac{\rho^2_\phi}{\sin^2\theta}\right) \partial_r u - \rho \rho_\theta \partial_\theta u - \frac{\rho \rho_\phi}{\sin^2\theta}\partial_\phi u -  \rho^2\hat{G}_{\rho, \varepsilon}\xi \right] 
\end{align}

We now make a power series ansatz for the non-normalized DNO $\hat{G}_{\rho, \varepsilon}$:
\begin{align}
\label{DNOpwrseries}
\hat{G}_{\rho, \varepsilon}\xi  = \sum_{n=0}^\infty \varepsilon^n \hat{G}_{\rho, n}\xi
\end{align}
for $\xi \in H^{s+\frac 1 2}(S^2)$ and $s \in \mathbb{N}$. By \eqref{upwrseries} and \eqref{DnoHatRep}, we obtain the recursive relationship, 
\begin{align}
\label{DnoRec}
\hat{G}_{\rho, n} \xi & = \partial_r u_n + 2\rho \partial_r u_{n-1} - \rho_\theta \partial_\theta u_{n-1} - \frac{\rho_\phi}{\sin^2\theta}\partial_\phi u_{n-1} + \left(\rho^2 + \rho^2_\theta + \frac{\rho^2_\phi}{\sin^2\theta} \right)\partial_r u_{n-2}\\
\nonumber
& \quad - \rho \rho_\theta \partial_\theta u_{n-2} -\frac{\rho \rho_\phi}{\sin^2\theta} \partial_\phi u_{n-2} - 2\rho \hat{G}_{\rho, n-1} \xi - \rho^2\hat{G}_{\rho, n-2} \xi 
\end{align}

The following theorem proves Theorem \ref{t:Thm1} in three dimensions and justifies the convergence of \eqref{DNOpwrseries} for suitably small $\varepsilon > 0$.

\begin{thm} \label{t:ConvDNO3d}
Let $\xi \in H^{s+3/2}(S^2)$.  Then
\begin{align}
\label{DNOrecursivebnd3d}
\|\hat{G}_{\rho, n}\xi \|_{H^{s+1/2}(S^2)} \leq K_1 \| \xi \|_{H^{s+3/2}(S^2)}A^n
\end{align}
for $A> C |\rho|_{C^{s+2}}$
\end{thm}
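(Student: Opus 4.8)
The plan is to establish \eqref{DNOrecursivebnd3d} by induction on $n$, following verbatim the structure of the proof of Theorem~\ref{t:ConvDNO2d}: the recursion \eqref{DnoRec} plays the role of its two-dimensional counterpart, the harmonic-extension estimate \eqref{u-bound3d} of Theorem~\ref{e:Thm33} supplies control of the $u_n$, the trace theorem passes from $B$ to $S^2$, and the multiplier estimate \eqref{e:Nicholls} (on $H^{s+1/2}(S^2)$) lets us peel off the $\rho$-dependent coefficients. Throughout, $C_1$ denotes the norm of the trace operator $H^{s+1}(B)\to H^{s+1/2}(S^2)$.

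For the base case $n=0$, the recursion gives $\hat G_{\rho,0}\xi=\partial_r u_0$, so $\|\hat G_{\rho,0}\xi\|_{H^{s+1/2}(S^2)}\le C_1\|\partial_r u_0\|_{H^{s+1}(B)}\le C_1\|u_0\|_{H^{s+2}(B)}\le C_1 K_0\|\xi\|_{H^{s+3/2}(S^2)}$ by Theorem~\ref{e:Thm33}, which gives \eqref{DNOrecursivebnd3d} at $n=0$ as long as $K_1\ge C_1 K_0$. For the inductive step, assume \eqref{DNOrecursivebnd3d} for all $n<N$ and apply the triangle inequality to \eqref{DnoRec}. The terms fall into three groups. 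The leading term $\partial_r u_N$ is bounded as in the base case by $C_1 K_0\|\xi\|_{H^{s+3/2}(S^2)}A^N$. The terms in which a first-order derivative of $u_{N-1}$ or $u_{N-2}$ is multiplied by one of $\rho,\rho_\theta,\rho_\phi/\sin\theta$ (linear in $\rho$) or by one of $\rho^2,\rho\rho_\theta,\rho\rho_\phi/\sin\theta,\rho_\theta^2,\rho_\phi^2/\sin^2\theta$ (quadratic in $\rho$): here one applies \eqref{e:Nicholls} to extract the coefficient, each being controlled in $C^{s+1/2+\delta}(S^2)$ by $|\rho|_{C^{s+2}}$ resp.\ $|\rho|^2_{C^{s+2}}$, then the trace theorem and Theorem~\ref{e:Thm33}, producing bounds $c\,|\rho|_{C^{s+2}}K_0\|\xi\|_{H^{s+3/2}(S^2)}A^{N-1}$ resp.\ $c\,|\rho|^2_{C^{s+2}}K_0\|\xi\|_{H^{s+3/2}(S^2)}A^{N-2}$. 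Finally, the feedback terms $2\rho\,\hat G_{\rho,N-1}\xi$ and $\rho^2\hat G_{\rho,N-2}\xi$ are handled by \eqref{e:Nicholls} and the inductive hypothesis, giving $2M|\rho|_{C^{s+2}}K_1\|\xi\|_{H^{s+3/2}(S^2)}A^{N-1}$ and $M|\rho|^2_{C^{s+2}}K_1\|\xi\|_{H^{s+3/2}(S^2)}A^{N-2}$.

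Summing the three groups and factoring out $\|\xi\|_{H^{s+3/2}(S^2)}A^N$ yields
\[
\|\hat G_{\rho,N}\xi\|_{H^{s+1/2}(S^2)}\le \|\xi\|_{H^{s+3/2}(S^2)}A^N\left(C_1 K_0+\frac{\alpha_1|\rho|_{C^{s+2}}(K_0+K_1)}{A}+\frac{\alpha_2|\rho|^2_{C^{s+2}}(K_0+K_1)}{A^2}\right)
\]
for constants $\alpha_1,\alpha_2$ depending only on $s$. Fix $K_1:=2C_1K_0$. Then for $A>C|\rho|_{C^{s+2}}$ with $C$ chosen large enough (and with $C\ge 2K_0C_0$, so that Theorem~\ref{e:Thm33} applies), the ratios $|\rho|_{C^{s+2}}/A\le 1/C$ and $|\rho|^2_{C^{s+2}}/A^2\le 1/C^2$ are small enough that the parenthesized factor is $\le 2C_1K_0=K_1$, which is \eqref{DNOrecursivebnd3d} at $n=N$. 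This closes the induction and proves Theorem~\ref{t:ConvDNO3d}, hence Theorem~\ref{t:Thm1} in three dimensions. As usual, the convergence of \eqref{DNOpwrseries} to an analytic operator-valued function follows from the geometric bound \eqref{DNOrecursivebnd3d}.

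The only point requiring care beyond the two-dimensional argument is the polar factors $\sin^{-1}\theta$ and $\sin^{-2}\theta$ in \eqref{DnoRec}, since \eqref{e:Nicholls} is stated for coefficients regular on $S^2$. As in the derivation of \eqref{Ueqn} and in the proof of Lemma~\ref{lem32}, one regroups each offending product so that one factor $\sin^{-1}\theta$ is attached to $\partial_\phi$ to form the intrinsic first-order operator $(\sin\theta)^{-1}\partial_\phi$ already appearing in $L_1$, and the remaining factor is attached to $\rho_\phi$ to form $(\sin\theta)^{-1}\rho_\phi$, a regular function on $S^2$ controlled in $C^{s+1/2+\delta}(S^2)$ by $|\rho|_{C^{s+1}}\le|\rho|_{C^{s+2}}$ exactly as in Lemma~\ref{lem32}. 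With this grouping every coefficient lies in the space required by \eqref{e:Nicholls} and every derivative of $u_n$ appearing is of order at most one in the intrinsic sense, so the estimates above go through unchanged; this is the main, and essentially only, obstacle.
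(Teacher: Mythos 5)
Your proof is correct and follows essentially the same route as the paper: induction on $n$, triangle inequality applied to the recursion \eqref{DnoRec}, the trace theorem from $H^{s+1}(B)$ to $H^{s+1/2}(S^2)$, Theorem~\ref{e:Thm33} for the $u_n$, the multiplier estimate \eqref{e:Nicholls}, and the inductive hypothesis for the feedback terms. Your closing remark about regrouping the $\sin^{-2}\theta$ factors as $(\rho_\phi/\sin\theta)\cdot(\sin^{-1}\theta\,\partial_\phi)$ is a useful clarification that the paper's proof uses implicitly but does not spell out.
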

\begin{proof}
We will proceed via induction. First, we show \eqref{DNOrecursivebnd3d} fo $n=0$:
\begin{align*}
    \|\hat{G}_{\rho, 0}\xi \|_{H^{s+1/2}(S^2)} &  \leq \| \partial_r u_0\|_{H^{s+1/2}(S^2)} \leq C_1 \| \partial_r u_0\|_{H^{s+1}(B)} \\
    & \leq C_1\|u_0\|_{H^{s+2}(B)} \leq C_1 K_0 \| \xi \|_{H^{s+3/2}(S^2)}
\end{align*}
In the second inequality of the first line, we have used the standard Trace theorem, while Theorem \ref{e:Thm33} is used in the second line.
Now suppose that \eqref{DNOrecursivebnd3d} holds for $n<N$.  Then we have the following estimate:
\begin{align*}
\| \hat{G}_{\rho, N} \xi \|_{s+\frac 1 2 } 
& \leq \left\| \partial_r u_N \right\|_{s+\frac 1 2 } 
+ 2 \left\| \rho \partial_r u_{N-1} \right\|_{s+\frac 1 2 }  
+ \left\| \rho_\theta \partial_\theta u_{N-1} \right\|_{s+\frac 1 2 }  
+ \left\| \frac{\rho_\phi}{\sin^2\theta}\partial_\phi u_{N-1} \right\|_{s+\frac 1 2 }  \\
& \quad + \left\| \rho^2 \partial_r u_{N-2} \right\|_{s+\frac 1 2 } 
+ \left\| \rho^2_\theta \partial_r u_{N-2} \right\|_{s+\frac 1 2 } 
+ \left\| \frac{\rho^2_\phi}{\sin^2\theta}\partial_r u_{N-2} \right\|_{s+\frac 1 2 }
+ \left\| \rho \rho_\theta \partial_\theta u_{N-2} \right\|_{s+\frac 1 2 } \\
&  \quad +  \left\| \frac{\rho \rho_\phi}{\sin^2\theta} \partial_\phi u_{N-2} \right\|_{s+\frac 1 2 } 
+ 2 \left\| \rho \hat{G}_{\rho, N-1} \xi \right\|_{s+\frac 1 2 } 
+ \left\| \rho^2\hat{G}_{\rho, N-2} \xi \right\|_{s+\frac 1 2 }\\
& \leq C_1K_0 \left\| \xi \right\|_{s+\frac 3 2 }A^N 
+ M(s)C_1K_0 \Big( 2 |\rho|_{C^{s+\frac 1 2 +\delta}}  \left\| \xi \right\|_{s+\frac 3 2 }A^{N-1}  \\
& \quad + |\rho|_{C^{s+\frac 3 2 +\delta}} \| \xi \|_{s+\frac 3 2 }A^{N-1}  +|\rho|_{C^{s+\frac 3 2 +\delta}}\| \xi \|_{s+\frac 3 2 }A^{N-1}  \\
& \quad + |\rho|^2_{C^{s+\frac 1 2 +\delta}} \| \xi \|_{s+\frac 3 2 }A^{N-2} 
+ |\rho|^2_{C^{s+\frac 3 2 +\delta}} \| \xi \|_{s+\frac 3 2 }A^{N-2} 
+ |\rho|^2_{C^{s+\frac 3 2 +\delta}} \| \xi \|_{s+\frac 3 2 }A^{N-2} \\
& \quad + |\rho|_{C^{s+\frac 1 2 +\delta}}|\rho|_{C^{s+\frac 3 2 +\delta}} \|\xi\|_{s+\frac 3 2 }A^{N-2}  
+  |\rho|_{C^{s+\frac 1 2 +\delta}}|\rho|_{C^{s+\frac 3 2 +\delta}} \| \xi \|_{s+\frac 3 2 }A^{N-2} \\
& \quad + K_1 |\rho|_{C^{s+\frac 1 2 +\delta}} \| \xi \|_{s+\frac 3 2 }A^{N-1}+ K_1 |\rho|^2_{C^{s+\frac 1 2 +\delta}} \| \xi \|_{s+\frac 3 2 }A^{N-2} \Big)\\
& \leq K_1 \| \xi \|_{s+ \frac 3 2 } A^N, 
\end{align*}
for $K_1 = \max \{ 2C_1K_0, 2C_1K_0M(s) \}$ and $A > C|\rho|_{C^{s+2}}$.
\end{proof}

\section{Proof of Corollary~\ref{c:Cor1}: Analyticity of the Steklov eigenvalues}
\label{s:Steklov-anal}
We now have all of the ingredients to prove Corollary \ref{c:Cor1}. 

\begin{proof}[Proof of Corollary \ref{c:Cor1}.]
Theorems \ref{t:ConvDNO2d} and \ref{t:ConvDNO3d}  show that the expansion of the non-normalized DNO $\hat G_{\rho, \varepsilon}$ given in \eqref{DNOanalytic} and \eqref{DNOpwrseries} is uniformly convergent for small $\varepsilon$. 
It follows that $G_{\rho, \varepsilon} \colon H^{s+\frac 3 2}(S^1) \to H^{s + \frac 1 2}(S^1)$ is analytic for small $\varepsilon$.  The DNO operator $ G_{\rho, \varepsilon}\colon L^2(S^{d-1}) \to L^2(S^{d-1})$ is self-adjoint \cite{ArendtDNO}, hence closed. 
The result now follows from \cite[Ch. 7, Thm 1.8, p. 370]{Kato_1966}. 
\end{proof}

\subsection*{Acknowledgments} We would like to thank Nilima Nigam and Fadil Santosa for helpful discussions. B. Osting is partially supported by NSF DMS 16-19755 and 17-52202.

\bibliographystyle{plainurl}
\bibliography{SteklovAsymptotics}

\end{document}